\theoremstyle{definition}
\newtheorem{defn}{Definition}[section]
\theoremstyle{plain}
\newtheorem{theorem}[defn]{Theorem}
\newtheorem{lemma}[defn]{Lemma}
\theoremstyle{remark}
\newcommand{\singlespaced}{\renewcommand{\baselinestretch}{1}\normalfont}
\newcommand{\doublespaced}{\renewcommand{\baselinestretch}{1}\normalfont}
\newcommand{\realdoublespaced}{\renewcommand{\baselinestretch}{2}\normalfont}
\numberwithin{equation}{section}
\def\thetitle{NUMERICAL ANALYSIS OF TARGET ENUMERATION VIA EULER CHARACTERISTIC INTEGRALS}
\def\theauthor{Sam Gittleman Krupa}
\def\theyear{2013}
\begin{document}

\large\newlength{\oldparskip}\setlength\oldparskip{\parskip}\parskip=.1in
\thispagestyle{empty} \realdoublespaced

\begin{center}
\vspace*{\fill} \thetitle

\theauthor

A THESIS

in

Mathematics

\large Presented to the Faculties of the University of
Pennsylvania in Partial
 Fulfillment of the Requirements for the Degree of Master of
 Arts

\large
\theyear
\end{center}

\vspace{10mm}

\noindent\makebox[0in][l]{\rule[2ex]{3in}{.3mm}}  \hspace{-3.5mm} \singlespaced Radmila Sazdanovic \newline Supervisor of Thesis

\vspace{10mm}

\noindent\makebox[0in][l]{\rule[2ex]{3in}{.3mm}} \hspace{-3.2mm} \singlespaced Jonathan Block \newline Graduate Group Chairperson \vspace*{\fill}

\normalsize\parskip=\oldparskip

\doublespaced

\vspace{24mm}
\vspace*{\fill} \begin{center} {\it
To mom, dad, and Mariel}
\end{center}
\vspace*{\fill}

\section*{Acknowledgments}
This thesis was written almost entirely under the advisement of Professor Michael Robinson, now at American University. I cannot thank Professor Robinson enough for his tremendous help and encouragement, for introducing me to research, and for providing my research topic. Thanks also to his newborn son Theodore, who patiently and adorably joined us for advising sessions. Professor Robinson was until recently a member of the Applied Algebraic Topology team here at Penn. The team is run by Professor Robert Ghrist, and I would like to thank the rest of Professor Ghrist's team for providing feedback on the many iterations of this thesis I have presented at the group meetings over the years. I especially thank Professor Ghrist for providing support for my research.

Thanks further to Professor Radmila Sazdanovic, currently a member of Professor Ghrist's team, for supervising my thesis defense in Professor Robinson's absence, and for teaching me algebraic topology in my final semester here at Penn. 

Moreover, I would like to thank many of the other aspiring mathematics students at Penn. This includes Max Kieff (W'14) and Angelo Lee (W'13). They were always there if I wanted to talk a problem through or discuss a bug in one of my ideas. And special thanks to Robert Sharp (C'13), who found a few errors in earlier drafts of this work.

I submitted an earlier draft of this paper to \emph{Discrete \& Computational Geometry}. I would like to thank the referees for their feedback. Many of their ideas have been included in this paper in the further-work section (Section \ref{conclusion}).

And most importantly, thanks to my family for providing more support than I often realize. Without fail, they believe in my work, goals, and abilities, even when I don't. I couldn't be more thankful.

I gratefully acknowledge funding from ONR N000140810668.

\include{Abstract}
\begin{abstract}

Given a continuous sensor field, we can apply the Euler characteristic integral approach to count the number of targets  in the sensor field. If the sensor field is discrete, the Euler integral approach introduces errors into our target count. In this paper, we study the behavior of the Euler integral when applied to discrete sensor fields. Under precise assumptions, we count the number of first- and second-order errors in target count, and  discover a formula proportional to much higher order errors. This allows us to derive a point estimator for the number of targets in a discrete sensor field. Finally we derive an asymptotic result, providing insight into how the discrete Euler integral behaves for a large number of targets. 

   \end{abstract}

\newpage

\tableofcontents
\newpage

\pagenumbering{arabic}
\pagestyle{myheadings} \markright{}

\restylefloat{figure}
\relpenalty=9999
\binoppenalty=9999





\section{Introduction}
A \emph{sensor field} is a collection of sensors. Imagine a planar sensor field that is infinitely dense, with infinitesimally small sensors. At each point in our sensor field, there is a sensor which records simply the number of objects directly over it. Then, throw many objects of any shape onto this field. We require that the two-dimensional projection of each object be a compact connected set. We also require that all of the two-dimensional projections contain the same number of holes. We also require that the number of holes in the two-dimensional projections cannot be equal to one (so the annulus is not allowed). If this is the case, then according to the seminal work of   Yuliy Baryshnikov and Robert Ghrist \cite{ghrist09,baryshnikov10}, the number of objects (or \emph{targets}) can be exactly determined using nothing but the sensor field data recorded, i.e. the location of each sensor and how many objects overhead it detects. This concept of counting the number of holes in the two-dimensional projection is captured by the idea of the geometric  Euler characteristic \cite{ghrist09}.

If the sensor field is discrete and not of infinite sensor density the above method can be applied to this target enumeration problem, but it will only give an estimate. The most obvious error occurs when two targets do not overlap, but there is no sensor in between them to differentiate the two targets from one another. In this case, the two targets instead appear  to be one large target (see Figure \ref{fig1}). No algorithm could detect two targets here unless information on the targets' shapes is given.

\begin{figure}[h]
       \hspace*{130pt} \includegraphics[width=0.50\textwidth]{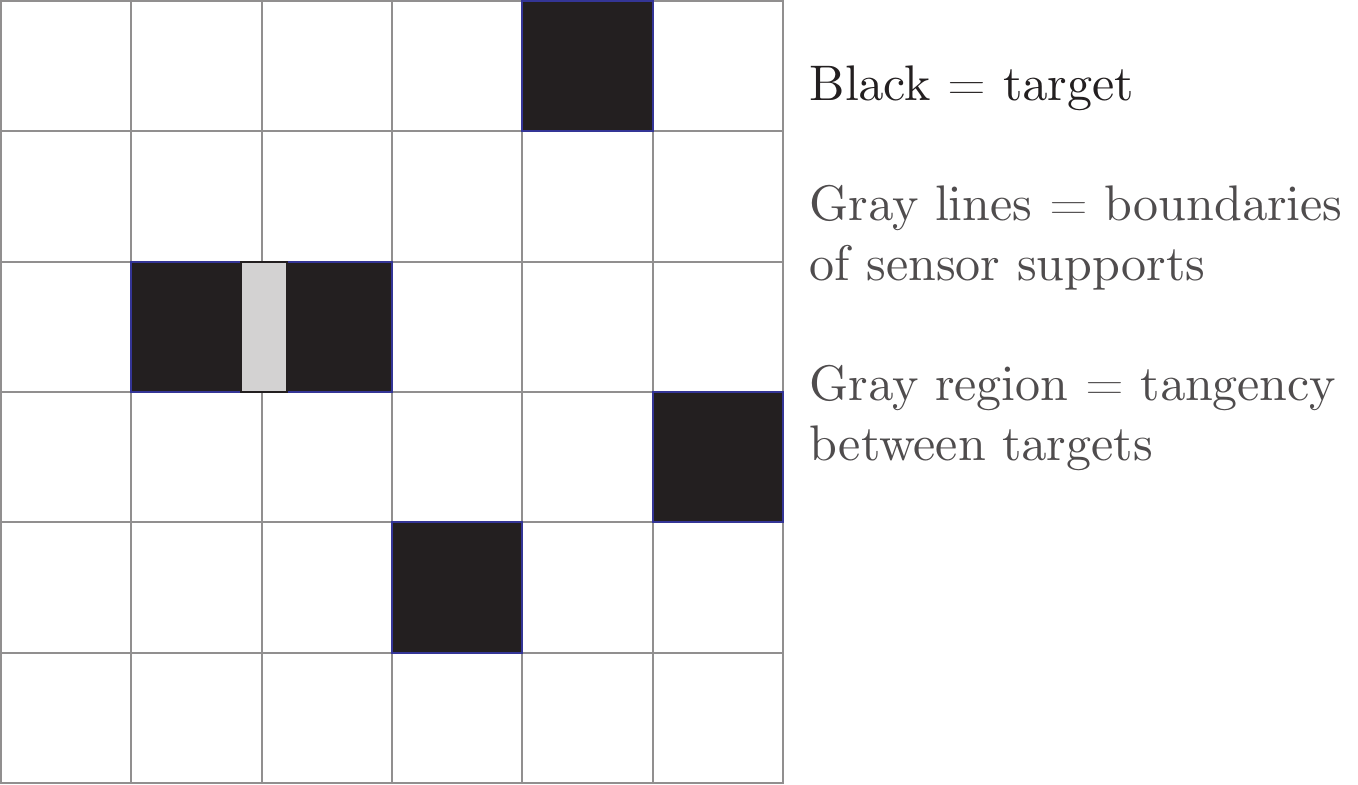}
  \caption{Example of a sensor grid. The union of each sensor's $1 \times 1$ area makes up the entire field. When two targets are almost tangent with no overlap but no sensor in between, they become impossible to differentiate. The tangency above (highlighted in gray) causes the two targets to appear as one. }
 \label{fig1}
\end{figure}

\begin{figure}
\centering
      \includegraphics[width=0.5\textwidth]{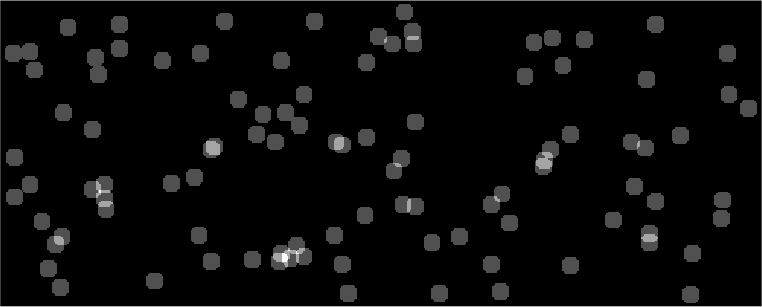}
  \caption{A visualization of the return of a typical discretized sensor grid, as will be studied throughout this paper. This sensor grid is $200\times500$, and 100 disk-shaped targets of radius 6 have been placed over it. A sensor with as many or more targets over it then all other sensors is painted white, a sensor with no target over it is painted black, and all other sensors are painted on a sliding scale between white and black.}
 \label{example_shot_100_targets_200by500_r6}
\end{figure}

This paper provides numerical analysis of the target enumeration via  Euler characteristic integral technique given by Baryshnikov and Ghrist  \cite{ghrist09,baryshnikov10}. In particular, we examine different sources of error when our sensor field is discretized and determine how to account for them. We begin by analyzing the causes of the most common first order (Section \ref{first_order_section}) and second order errors (Section \ref{second_order_section}) in target enumeration when the sensor field is discretized. The probabilities of these different errors appearing, under the assumption that the targets' two-dimensional projections are disk shaped and targets fall onto the sensor field under a discrete uniform distribution, are estimated using combinatorial arguments.  We then derive an asymptotic result showing how the Euler characteristic integral behaves for large numbers of targets (Theorem \ref{asymptotic_thm}). Finally, we find that our second order error formula is proportional to much higher order errors, and use this knowledge to approximate the number of targets resting in a sensor field (Section \ref{applications}).

\subsection{A survey of the field}
\label{survey}
Tomorrow's man will step into a world that is inundated with what we today call ``sensors.'' Fields of sensors will count him as he steps onto the street, as he drives his hover car, and when he steps onto the battlefield. Tiny sensors will precisely read his DNA. Other sensors will tirelessly count the pesticide molecules on his vegetables. Sensors of all sorts will appear to read his mind, returning data to him just when he needs it.  These same sensors will feed smart algorithms working to improve traffic flow, crop development, and battlefield awareness \cite{estrin02}. But before this world can exist for our everyman to experience, much work needs to be done.

The study of sensor networks is nascent, and the fundamentals are still being developed. There are many active research areas in this exciting new field. There are open problems related to networking, communication,  and the signal processing algorithms used to connect individual sensors into working, collaborative sensor fields using minimal computing resources and energy (\hspace*{-1.3mm}\cite{boulis03,fang03,guibas02,li02}, amongst many others).  Another important open problem is how to monitor and make sense of the large amounts of data coming off real world sensor fields with broken and malfunctioning sensors \cite{zhao03}.  And finally, complete mathematical theories for target enumeration and tracking via fields of sensors are just starting to be developed. 

The problem of interest here is \emph{target enumeration}: given the return of a sensor field, how best to fuse the data from the individual sensors so as to count the total number of unique targets being picked up by the sensor field.

Much of the current work in target enumeration or the closely related field of target tracking \cite{guibas02,he06,jung02} assumes targets are able to be easily differentiated from one another. However, a major open problem is the \emph{complete} target counting and tracking problem where targets are less easily differentiated from one another and from their environment. Another application of this work might be instances where we are not allowed to use identification because of privacy concerns.

Wo provide two specific examples of work that has been done. One example involves sensors which return discrete target counts, and the other example involves sensors which return real-valued target counts. The first example involves binary sensors, a type of discrete sensor. It has been found that for any fixed time, not enough information is contained in a binary sensor field to return an accurate estimate of the number of targets \cite{singh07}. But by looking at time series data through a particle filter, both target counts and target trajectories can be nicely approximated under the weak assumption of relatively small changes in the target velocity vector moment to moment. However, the techniques have so far only been developed in one space dimension and no theoretical efficacy analysis has been done.

In contrast to discrete sensors are sensors which return real-valued target counts. An example of this kind of sensor field is one where each sensor picks up a voltage related to the number of targets around that sensor, and the distances between the targets and the sensor. With this type of sensor field, it is possible to count the number of targets sensed by counting the number of relative maxima in the voltage data, when this data is thought of as a graph. However, current methods fail when targets are insufficiently separated or are in other complex arrangements \cite{fang03}. The authors in \cite{fang03}  try to reduce errors by intelligently turning on and off sensors and carefully tracking the effects of noise. They place approximate bounds on the expected errors in their methods, but again no close analysis of how errors accumulate under their algorithms has been performed.

The seminal work to date on a solution to the problem of target enumeration has been done by Y. Baryshnikov and R. Ghrist using the topological technique of the integral with respect to the Euler characteristic \cite{ghrist09,baryshnikov10}. In the case of infinitely dense continuous sensor fields, their breakthrough new theoretical framework  is able to accurately count targets which are in very complex configurations using sensors which return only local target counts. Further, the expected Euler characteristic integral of a Gaussian random field has  been worked out, which allows for unbiased target enumeration from noisy sensor fields \cite{bobrowski10}.  However, until now nothing more than conjecture has existed concerning numerical analysis of the Euler characteristic integral when our sensor field is discretized.

We begin here with a preliminary numerical analysis of the Euler characteristic integral as a tool for target enumeration, picking up where Baryshnikov and Ghrist left off. We make precise assumptions. Then, we derive expectations for the number of target counting errors in a discrete sensor field when the enumeration via Euler characteristic integral theory is applied. We gain an understanding into how different types of discretization errors occur in target counting. The result is the only technique known to the author that gives an unbiased approximation to the number of targets in a discrete sensor field returning only local target counts at each sensor.

\subsection{Background}
We present the main result we use in topological target enumeration. First, we define some terminology. We think of our sensor field as a subset $\Omega$ of $\mathbb{R}^2$. The targets can be thought of as being not point particles but two-dimensional laminae (subsets of the plane). A sensor field, discrete or continuous, has a corresponding \emph{height function} $h(x) \in \mathbb{Z}$ which at each point $x$ in the sensor field $\Omega$ returns the number of targets the sensor at $x$ is detecting. Note that even when the sensor field is discrete, its height function can still be thought of as coming from a continuous sensor field. 
\begin{defn}[Excursion sets \cite{ghrist09}] For an integer $s$, a topological space $X$ and for a function $h: X \to \mathbb{Z}$, the set $\{h=s\}$ is the \emph{level set} $\{x \in X: h(x) =s\}$, and the set $\{h > s\}$ is the \emph{upper excursion set} $\{x \in X: h(x) >s\}$. Lower excursion sets are likewise defined. 
\end{defn}
Assume  each target, thought of as a two-dimensional laminae, has Euler characteristic equal to one. Then when the sensor field is continuous, the integral of its height function with respect to Euler characteristic returns the number of targets in the sensor field. We calculate the Euler characteristic integral using the duality formula derived by Ghrist and Baryshnikov:
\begin{theorem}[{\cite[Theorem 4.3]{ghrist09}}]
\label{theoretical_target_enumeration_theorem}
For $h:  \mathbb{R}^2 \to \mathbb{N}$ constructible and upper semicontinuous,
\begin{align}
\int_{ \mathbb{R}^2}h\,d\chi=\sum_{s=0}^{\infty}(\beta_0\{h>s\} -\beta_0\{h\leq s\}+1), \label{duality}
\end{align}
where $\beta_0$ denotes the zeroth \emph{Betti number}, or equivalently, the number of connected components of the set.
\end{theorem}
\emph{Constructible} functions are ``tame" and integer-valued on a topological space (for a quick introduction, see \cite{baryshnikov10}). 

While not  tested in this article, we expect  \eqref{duality} to be the most numerically stable of all equivalent formulae \cite{RobinsonSaid}. 

\section{Numerical Analysis}
\label{numerical_analysis}
\label{conditions}
In this paper we study ideal discrete sensor fields, which are made up of a finite number of sensors, all of which are assumed to be performing their function without error. The object of our study are discrete sensor fields which are made up of collections of square sensors of dimension $1\times 1$ which detect the number of targets above them without error. These sensors can also be thought of as point particles with square sensing regions. The sensors are then arranged in regular grids with no overlap or space among the sensing regions.

In this paper we will think of our sensor field as a subset $\Omega$ of $\mathbb{R}^2$, and we will work with the following height function. Define a height function $h$ on $\Omega$ such that for $x$ in $\Omega$ not along the boundary of any sensor's sensing region, $h(x)$ returns the total number of targets above that sensor whose sensing region contains $x$. In our construction of an ideal discrete sensor field, there is no sensing region along the boundaries of the sensor supports, and any target without width that lies entirely along the boundaries of sensor supports will not be picked up by our sensor field. Then for $x$ in $\Omega$ along the boundary of a sensor support, we define $h(x)$ such that its value agrees with the value returned by the sensor underneath or to the left of $x$. 

This discretization of the sensor field changes the basic behavior of the Euler characteristic integral in two important ways, which we discuss in Section \ref{finite_discete_errors}.

\subsection{Finite field discretization errors}
\label{finite_discete_errors}
\vspace{-1mm}
\begin{figure}
  \centering
  \subfloat[Without  zero-padding, \newline $\int_{\Omega}h\,d\chi=20$]{\label{no_zero_padding}\includegraphics[width=0.35\textwidth]{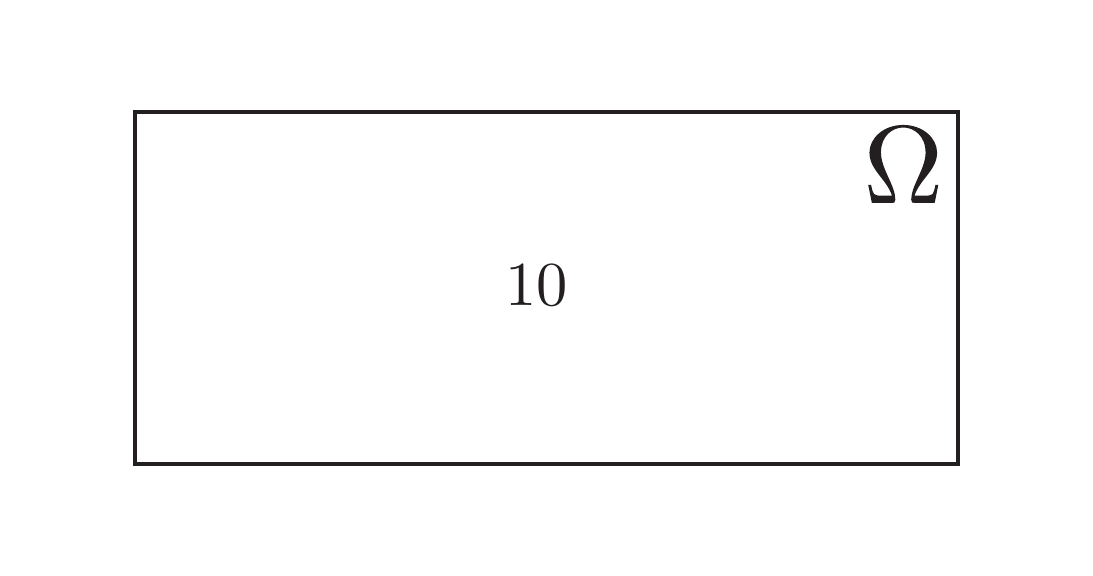}}  
  \hspace*{25mm}              
  \subfloat[With zero padding, \newline$\int_{\Omega}h\,d\chi=10$]{\label{zero_padding}\includegraphics[width=0.35\textwidth]{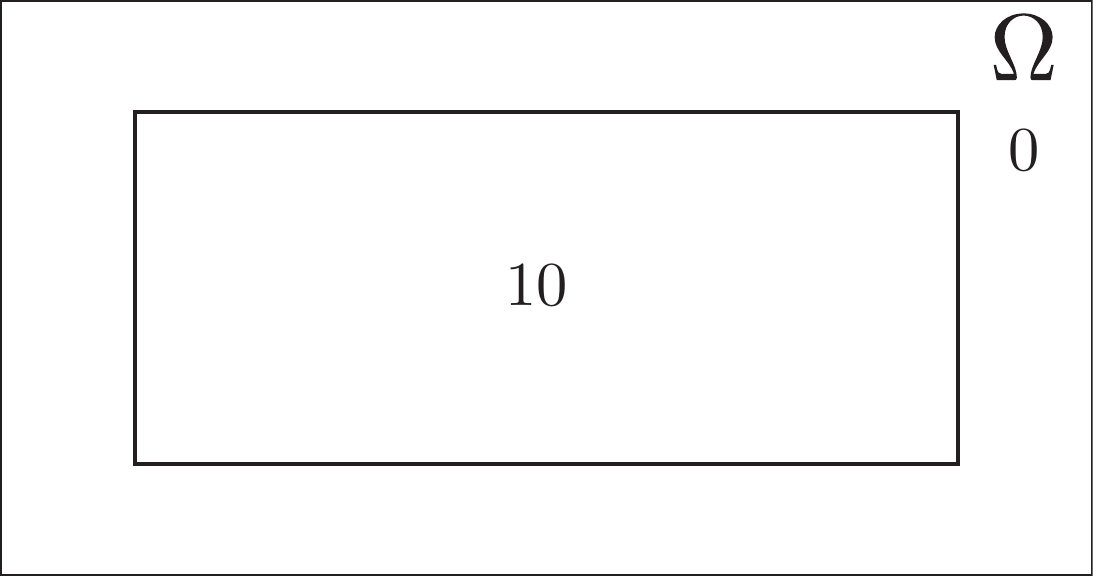}}                
                        
  \caption{The Euler characteristic integrals of the returns of these two sensor fields show the effect on the integral caused by  zero-padding the sensor field data.}
  \label{zero_padding_comparison}
\end{figure}

When using Theorem \ref{theoretical_target_enumeration_theorem} to evaluate the Euler characteristic integral over a \emph{bounded} and connected region $\Omega \subset \mathbb{R}^2 $, we need to be careful because the assumptions are not satisfied ($\Omega \neq \mathbb{R}^2 $). Suppose, for instance, the height function $h=10$ throughout $\Omega$, so that $h$ never vanishes along $\partial\Omega$. Then the calculated value of the integral using Theorem \ref{theoretical_target_enumeration_theorem} is 20.  This differs from the correct value of 10 by a factor of 2 (see Figure \ref{zero_padding_comparison}). To correct this error, we can zero-pad $h$ by constructing a function $g$ such that,

\[
 g(x) =
  \begin{cases}
   h(x) & \text{if } x \in \Omega \\
   0       & \text{if } x \not\in \Omega
  \end{cases}
\]
and then integrating $g$ over $\mathbb{R}^2$ instead of $h$ over $\Omega$. For the rest of this paper, this correction is always assumed to be made when working with finite, discrete sensor fields.

\subsection{Linearity of the discrete Euler characteristic integral}
Under discrete sensor fields, linearity of the Euler characteristic integral  (which is true \cite{eulertome} for continuous sensor fields) does not hold.  Take for instance  two height functions ($h_1$ and $h_2$) each generated by sensor fields of four sensors arranged in a square grid, where one small target is placed on each field. Then, with black indicating function value of $1$ and white indicating $0$,
\begin{center}
 \includegraphics[width=0.8\textwidth]{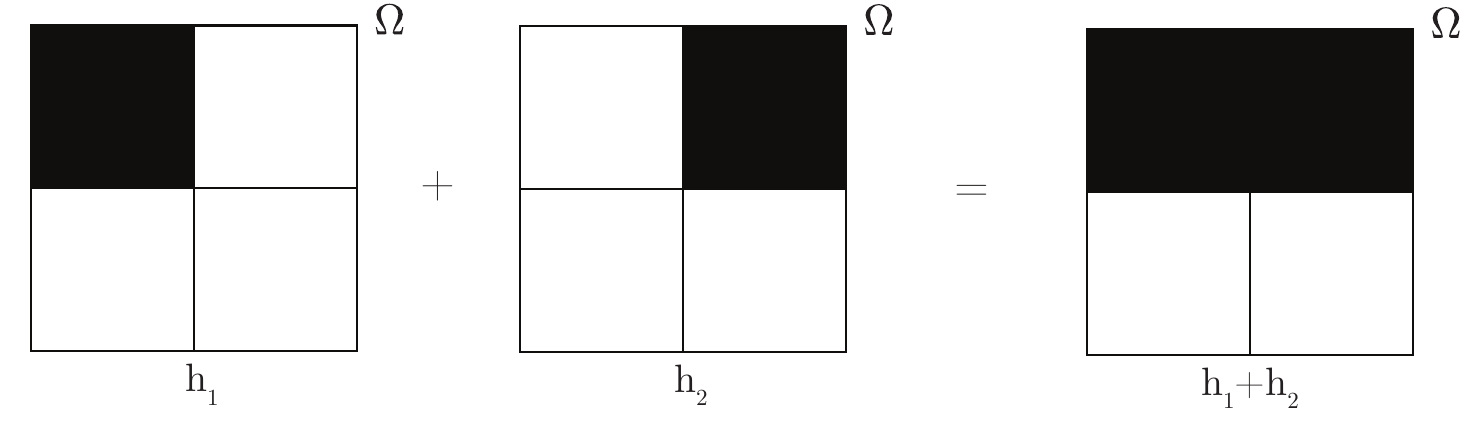}.
\end{center}
Note that $\int_{\Omega}h_1\,d\chi=1$ and $\int_{\Omega}h_2\,d\chi=1$, but $\int_{\Omega}(h_1+h_2)\,d\chi=1\neq1+1=2$. Thus, over discrete sensor fields the Euler characteristic integral is not a linear operator.

However, one particular instance of linearity still holds for discrete sensor fields.
\begin{lemma}
For any positive integer $m$,
\begin{align}
\int_{\Omega}\sum_{i=1}^m h\,d\chi &= m\int_{\Omega}h\,d\chi \label{linearity_relation}
\end{align}
\end{lemma}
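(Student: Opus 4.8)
The plan is to reduce everything to the duality formula of Theorem~\ref{theoretical_target_enumeration_theorem} and to exploit the fact that $\sum_{i=1}^n h$ is nothing but the single function $nh$, obtained from $h$ by scaling its (integer) values by the positive integer $n$. Since scaling commutes with the zero-padding convention ($h$ and $nh$ vanish on exactly the same set), I may apply the theorem to both sides directly. The core of the argument is then a ``layer-cake'' comparison of the excursion sets of $nh$ with those of $h$.

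First I would write out the right-hand side of Theorem~\ref{theoretical_target_enumeration_theorem} applied to $nh$:
\[
\int_{\Omega} nh\,d\chi=\sum_{s=0}^{\infty}\bigl(\beta_0\{nh>s\}-\beta_0\{nh\le s\}+1\bigr).
\]
The key observation is that, because $nh$ takes values only in $\{0,n,2n,\dots\}$, its upper and lower excursion sets are constant on each block of $n$ consecutive values of $s$. Concretely, for every integer $k\ge 0$ and every $s$ with $kn\le s<(k+1)n$ one has $\{nh>s\}=\{h>k\}$ and $\{nh\le s\}=\{h\le k\}$, since $nh(x)>s$ forces $nh(x)\ge(k+1)n$, i.e.\ $h(x)\ge k+1$, and a symmetric argument handles the lower set. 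I would verify these two set equalities carefully, paying attention to the endpoints $s=kn$ and $s=(k+1)n-1$, as this is where the bookkeeping could go wrong.

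With these identifications in hand, I would partition the index set for $s$ into the consecutive blocks $\{kn,kn+1,\dots,(k+1)n-1\}$, one block per integer $k\ge 0$. On the $k$-th block each summand equals the fixed quantity $\beta_0\{h>k\}-\beta_0\{h\le k\}+1$, and there are exactly $n$ values of $s$ in the block, so $\beta_0$ does not change within a block. Summing first within each block and then over $k$ yields
\[
\int_{\Omega} nh\,d\chi=\sum_{k=0}^{\infty} n\bigl(\beta_0\{h>k\}-\beta_0\{h\le k\}+1\bigr)=n\int_{\Omega}h\,d\chi,
\]
which is \eqref{linearity_relation}. The regrouping is legitimate because the series terminates: once $k$ exceeds the maximum value attained by the (bounded, zero-padded) function $h$, the summands are $0-1+1=0$.

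The step I expect to be the only real obstacle is the excursion-set identification in the middle paragraph---making sure that the blockwise-constant structure holds exactly, including at the block boundaries, so that each topological ``layer'' is simply counted $n$ times. Everything after that is a clean regrouping of a finite sum, and the failure of full linearity demonstrated just above is no impediment here, since all $n$ summands are identical and the scaling merely duplicates each level set rather than merging or splitting components.
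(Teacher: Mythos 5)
Your proposal is correct and follows essentially the same route as the paper: both apply the duality formula of Theorem~\ref{theoretical_target_enumeration_theorem} to $nh$, identify the excursion sets $\{nh>s\}$ and $\{nh\le s\}$ with $\{h>k\}$ and $\{h\le k\}$ on each block $kn\le s<(k+1)n$, and regroup the sum into blocks of $n$ identical terms. Your write-up is in fact slightly more careful than the paper's, since you justify the set identifications at the block endpoints and note that the termination of the series legitimizes the regrouping.
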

\begin{proof}
This is a result known \cite{eulertome} to hold over continuous sensor fields, but as seen above that does not necessarily mean the result holds over discrete sensor fields. Thus we argue as follows to show the result continues to hold over discrete sensor fields.

Note,
\begin{align}
\beta_0\{mh>mk+i\}&= \beta_0\{h>k\}  \label{level_relation1} \\
\beta_0\{mh\leq mk+i\}&= \beta_0\{h\leq k\}  \label{level_relation2}
\end{align}
for all $i=0,1,2,\dots, m-1$ and for all $k=0,1,2,3\dots$.
Then,
\begin{align*}
\int_{\Omega}\sum_{i=1}^m h\,d\chi &=\sum_{s=0}^{\infty}(\beta_0\{mh>s\} -\beta_0\{mh\leq s\}+1) \\
&=\sum_{i=0}^{m-1}\sum_{k=0}^{\infty}(\beta_0\{mh>mk+i\} -\beta_0\{mh\leq mk+i\}+1) \\
\shortintertext{and then by \eqref{level_relation1} and \eqref{level_relation2},} 
&=\sum_{i=0}^{m-1}\sum_{k=0}^{\infty}(\beta_0\{h>k\} -\beta_0\{h\leq k\}+1) \\
&=m\sum_{k=0}^{\infty}(\beta_0\{h>k\} -\beta_0\{h\leq k\}+1) \\
&=m\int_{\Omega}h\,d\chi
\end{align*}
which proves the statement.
\end{proof}
\subsection{Analysis of target enumeration}
The  numerical properties of Theorem \ref{theoretical_target_enumeration_theorem}, as a point estimator for target enumeration in discrete sensor fields, depend entirely on the magnitude, shape, and distribution of the targets, as well as the setup of the sensor field. For example, the smaller the targets, the less likely a tangency is to occur for a given sensor field size. We first consider tiny targets, smaller than the individual sensor size. Given a large sensor field, the probability that two of these tiny targets will appear tangent and cause an error is much smaller than when large disks are being placed on a sensor field of the same size, as each large disk has a large perimeter to which another target can come in contact. Noncircular objects have their own sets of problems. Imagine, for instance, a star-shaped target and a disk-shaped target of similar magnitude: the disk has far less perimeter, and cannot interact with other disks in the way a star could interact with other stars. Moreover, the more uniformly spread out our targets, the less likely an error is to occur, in contrast  to all the targets being concentrated in a single part of our sensor region $\Omega$. So, for our analysis we will work under a few assumptions:
\begin{enumerate}
\item
The sensor field is  a rectangular region, of height $x$ and width $y$. The sensor field is made up of a regular grid of $x y$ sensors, each having a size of $1 \times 1$. The union of the sensors gives us the complete sensor field.
\item 
There are $n$ targets, each of which are open disks of the \emph{same} integer radius $r\leq7$. We avoid the use of closed disks because they produce extraneous features when they are discretized, and these features introduce new errors into the Euler characteristic integral (see Figure \ref{equation_comparison}).
\item
Targets are discretely placed with integer center coordinates.
\item
Targets' centers are uniformly distributed over our region.
\item
If a disk doesn't land entirely in the sensor field, this is called an \emph{edge effect}. To prevent edge effects, targets' centers are kept a minimum of $r$~units away from the edge.  
\end{enumerate}

For a detailed analysis of discrete integer radius circles with integer centers (known as \emph{Freeman digitizations}) see \cite{kulpa79}. 

\begin{figure}
  \centering
  \subfloat[\scriptsize{Discretized closed~disk.}]{\label{disk_with_artifacts}\includegraphics[width=0.25\textwidth]{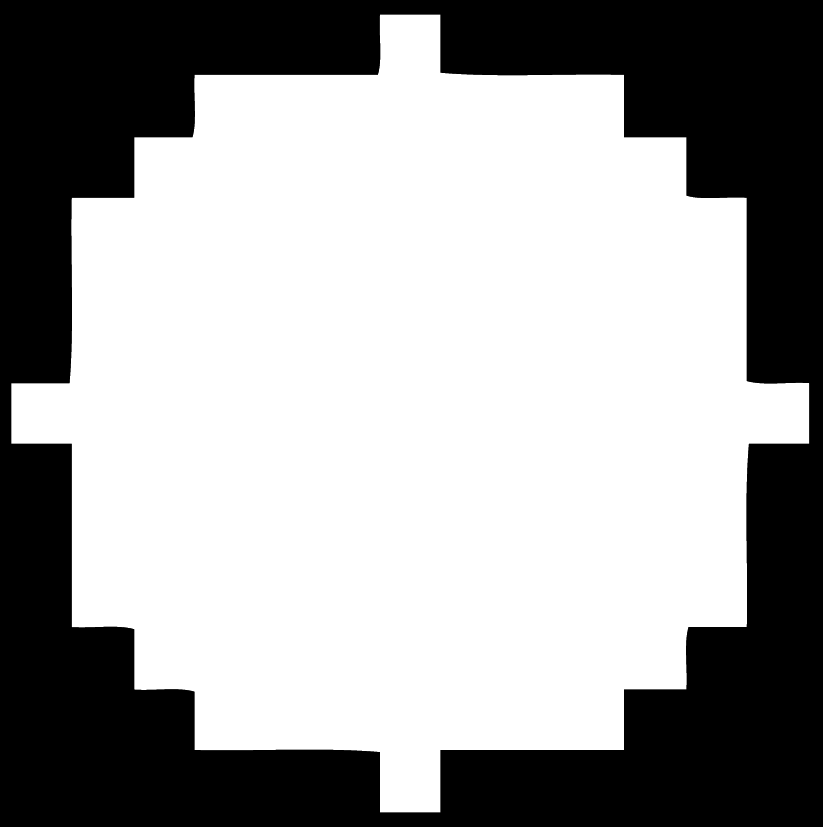}}
    \hspace*{25mm}                              
  \subfloat[\scriptsize{Discretized open~disk.}]{\label{disk_no_artifacts}\includegraphics[width=0.25\textwidth]{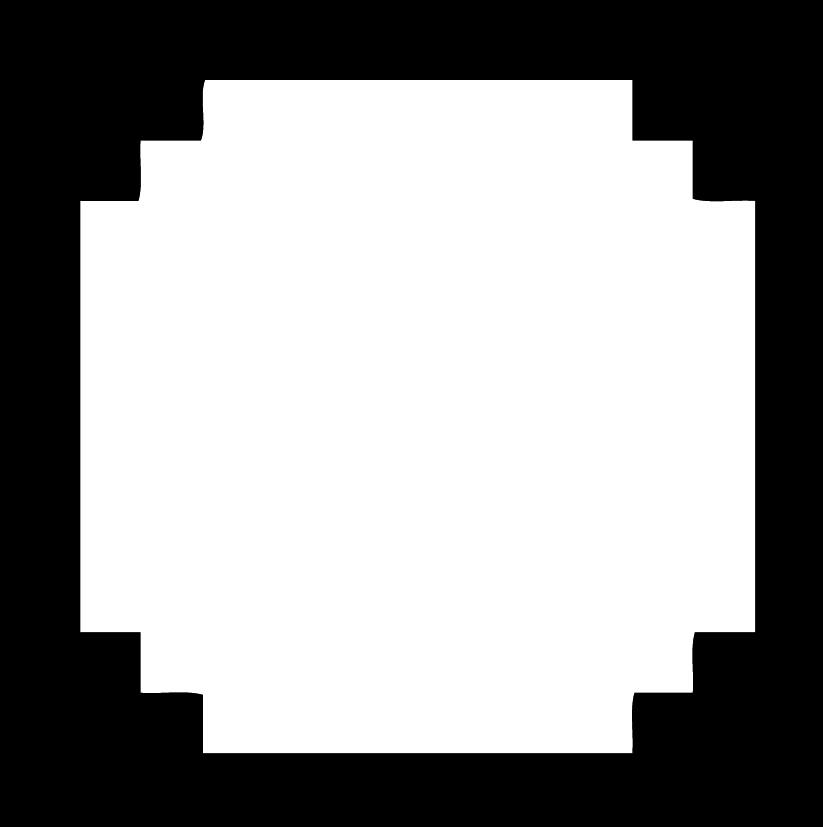}}                
                        
  \caption{A comparison of the difference between discretizing open and closed disks.}
  \label{equation_comparison}
\end{figure}

We proceed with calculating the expectation of $\int_{ \mathbb{R}^2}h\,d\chi$ when the integral is calculated using Theorem \ref{theoretical_target_enumeration_theorem}, and the assumptions are as above. Throughout this paper, we define $\mathrm{E}(\cdot)$ as the expectation function, and $\Pr(\cdot)$ as the probability function. Define \emph{errors} to be the (signed) difference between $n$ and the number of targets calculated using the Euler characteristic integral approach, where if $n$ is larger than the calculated number of targets, then there are positive errors. We let $\mathrm{E}_n$ denote the expected number of errors in our target count when using the Euler characteristic integral approach.

\subsection{First order approximation}
\label{first_order_section}

By ``first order errors,'' we mean, roughly, errors occurring with probability proportional to the probability that any two particular disks are tangent.

The goal in this section is to calculate the expected number of errors, to a first order approximation, when using the Euler characteristic integral approach to estimate the number of targets in a discrete sensor field. Our assumptions are as above:  the sensor field is a rectangular region of height $x$ and width $y$. We place $n$ disk-shaped targets down on our sensor field. Recall that $\mathrm{E}_n$ denotes the expected value for the numbers of errors. A first order approximation of $\mathrm{E}_n$ would be to count the expected number of tangencies amongst the targets, and take this as an estimate of the error. Let $T_{ij}$ be the indicator function,

\[
 T_{ij} =
  \begin{cases}
   1 & \text{if } \mbox{disk i is tangent to disk j }\\
   0       & \text{if } \mbox{disk i is not tangent to disk j}
  \end{cases}
\]

defined for all $i,j=1,2,\dots, n$ with $ i\neq j$. Then $\mathrm{E}\left(T_{ij}\right)=\Pr(T_{ij}=1)$. As the the locations of the disks' centers are independently and identically distributed, all tangencies occur with the same probability. 

For a fixed radius $r$, and a fixed sensor field region larger than $4r\times 4r$
\begin{equation}
\label{tangency_probability}
\Pr(T_{ij}=1)=\frac{{C_{r}}}{(x-2r+2)(y-2r+2)},
\end{equation}
for all $i,j=1,2,\dots, n$ with $ i\neq j$, and where ${C_{r}}$ is the average number of positions around one disk where a tangent disk could be placed.. This follows because the disks' centers are uniform random variables, and only an area of $(x-2r+2)(y-2r+2)$ is available for the centers to be placed. 

By the assumptions above we are preventing edge effects, so in calculating ${C_{r}}$, we need to correct for the fact that disks closer to the edge cannot have another disk placed to one side of them (see Figure \ref{fig3}). The magnitude of this correction is most notable when placing radius $r$ disks down on a $2r\times2r$ sensor field without edge effects: no matter now many disks are put down, as there is only one location where the disks can fall, no tangencies will occur.

\begin{figure}
\centering
      \includegraphics[width=0.7\textwidth]{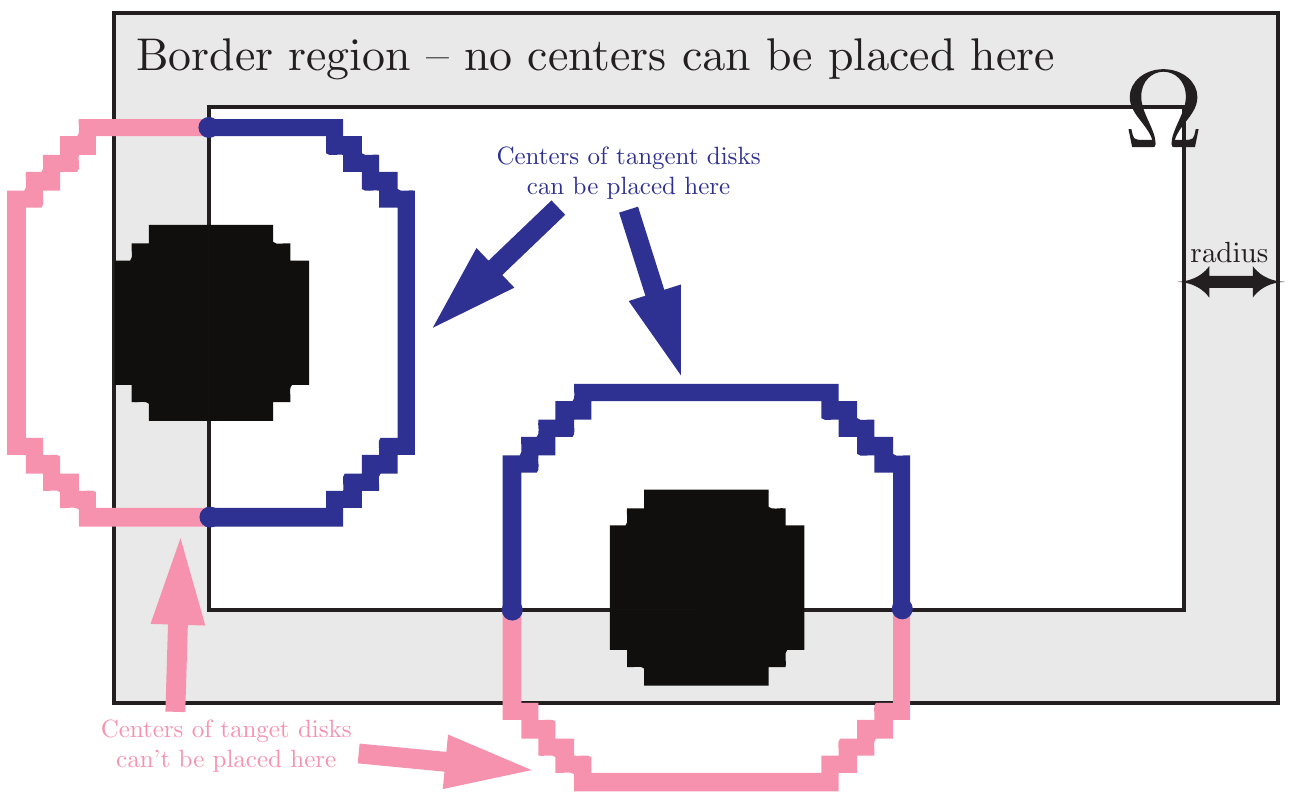}
  \caption{A disk with a center further than $3\cdot r$ from the edge of the sensor region can have a tangent disk placed at many spots around the disk. However, a disk with a center closer than $3\cdot r$ from the edge of the sensor region will have fewer positions where a tangent disk can be placed. Each disk in the sensor field pictured here has a ring around it. Each point in the ring is a point where a tangent disk could be centered. Note, however, that not all of the points in a ring are within the region where  centers can be placed.}
 \label{fig3}
\end{figure}

We calculate ${C_{r}}$. Assume our region is larger than $4r\times 4r$. At each of the points between $r$ and $3\cdot r$ from the edge, center a disk. For each disk calculate at how many points in our region a tangent disk could be placed (call these \emph{tangency points}). Then calculate the average number of tangency points for a disk centered at one of these locations close to the edge. Then we can calculate a weighted average representing the number of tangency points the average disk in our region is expected to have (assuming disks are placed down under a discrete uniform distribution). The number of tangency points around the average discrete disk in our sensing region will be lower then the number of tangency points around a discrete disk which is further than $3\cdot r$ from any edges.

Then, for a sensor field $\Omega$ of height $x$ and width $y$,
\begin{equation*}
 {C_{r}}= \frac{(x\cdot y - (y-4\cdot r)\cdot (x-4\cdot r))\cdot \alpha+(y-4\cdot r)\cdot (x-4\cdot r)\cdot \beta}{A(\Omega)}
\end{equation*}

where

\begin{itemize}
\item $\alpha $ denotes the average number of tangency points for a disk with center between $r$ and $3\cdot r$ from the edge
\item $\beta$ denotes the number of tangency points for a disk with center further than $3\cdot r$ from the edge
\item $A(\Omega) $ denotes the total area of the sensing region $\Omega$ 
\end{itemize}
With this we can correct for edge effects, but corner effects are left unchecked (a disk in a corner has even fewer tangency points then a disk simply near an edge). However, for reasonably large sensor grids, corner effects will have negligible effect.

 For example, in the case of disks of radius 6 there are 88 points around a disk with a center further than $18$ units from the edge  where the center of a second identical disk can be such that the second disk causes a tangency with the first disk. This is shown by the white ring around the discrete disk of radius 6 in Figure \ref{fig2}.

\begin{figure}[tb]
    \hspace*{120pt}   \includegraphics[width=0.5\textwidth]{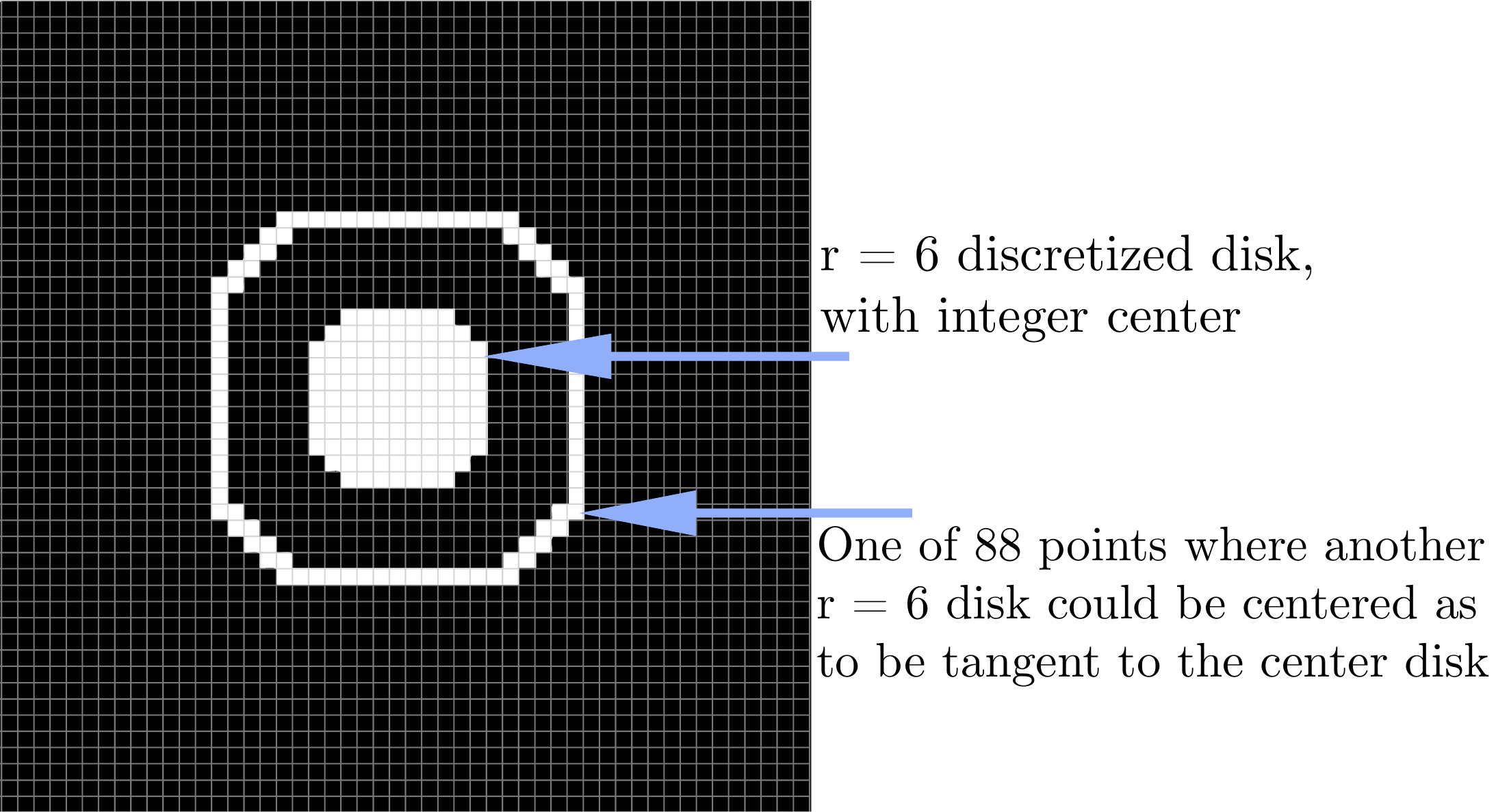}
  \caption{For disks of radius 6, the white ring contains 88 points where a disk's center can be placed, such that that disk causes a tangency with the center disk.}
 \label{fig2}
\end{figure}

Then, the expected number of tangencies amongst the $n$ disks is 
\begin{equation*}
\mathrm{E}\left(\frac{1}{2}\displaystyle \sum_{i=1}^{n}\sum_{\substack{j=1\\i\neq j}}^{n} T_{ij} \right) 
\end{equation*}
where we need to divide by two to prevent double counting because if disk $i$ is tangent to disk $j$, then disk $j$ is tangent to disk $i$. Then by linearity of expectation,
\begin{align}
\mathrm{E}\left(\frac{1}{2}\displaystyle \sum_{i=1}^{n}\sum_{\substack{j=1\\i\neq j}}^{n} T_{ij} \right) &=\frac{1}{2}\displaystyle \sum_{i=1}^{n} \displaystyle \sum_{\substack{j=1\\i\neq j}}^{n} \mathrm{E}\left(T_{ij}\right) \nonumber
\shortintertext{Then, because the disks' centers are independently and identically distributed random variables, we can simplify this expression to get}
&=\frac{1}{2}n\cdot(n-1)\Pr(T_{ij}=1)\nonumber
\shortintertext{for any $i\neq j$. Substituting in \eqref{tangency_probability}, we find}
\mathrm{E}\left(\frac{1}{2}\displaystyle \sum_{i=1}^{n}\sum_{\substack{j=1\\i\neq j}}^{n} T_{ij} \right)&=\frac{n\cdot(n-1)}{2}\frac{{C_{r}}}{(x-2r+2)(y-2r+2)} \label{tangency_formula}
\end{align}

Then, recalling that a first order approximation of $\mathrm{E}_n$ would be to take the number of tangencies \eqref{tangency_formula} as an estimate for the number of errors,
\begin{align*}
\mathrm{E}_n&\approx\frac{n\cdot(n-1)}{2}\frac{{C_{r}}}{(x-2r+2)(y-2r+2)} .
\end{align*}

Experimental results (Table \ref{summary:formulas}) suggest equation \eqref{tangency_formula} holds as an approximation to the number of errors as long as targets (disks in this case) are not too densely packed allowing for many higher order errors to occur; but this approximation still becomes poor very fast. 

So we need to find the next set of second order errors. 

\subsection{Second order approximation}
Recall that the probability that two particular disks $i$ and $j$ are tangent is ${\Pr(T_{ij}=1)}$. Similar to first order errors, by ``second order errors,'' we mean, roughly, errors occurring with probability proportional to ${\Pr(T_{ij}=1)}^2$.

\label{second_order_section}
\begin{theorem}[Second order error approximation formula]
\label{second_order_theorem}
For simplicity we assume a disk radius of 6. As above, we let ${C_{6}}$ denote the average number of tangency points for a radius 6 disk. We also let $\hat{A}=(x-2r+2)(y-2r+2)$ be the area of the region where disk's centers can be placed. Recall that $\mathrm{E}_n$ denotes the  expected value for the numbers of errors in target enumeration when using the Euler characteristic integral approach.
Then, a second order estimate of $\mathrm{E}_n$ is given by 
\begin{equation}
\frac{{C_{6}}}{c^{2}}\left(\left(1-\frac{c}{\hat{A}}\right)^{n}\cdot \hat{A}-\hat{A}+nc\right)
\label{final_second_order}
\end{equation}
where $c$ is a proportionality constant and when the conditions assumed in the beginning of  Section \ref{conditions} are met.
\end{theorem}

As a first step towards proving Theorem \ref{second_order_theorem}, let's calculate the Euler characteristic integral of a few different sensor fields to get a feel for how errors accumulate. See Figure \ref{fig4}.

\begin{figure}
  \centering
  \subfloat[2~targets, $\int_{\Omega}h\,d\chi=1$]{\label{sensor_field1}\includegraphics[width=0.31\textwidth]{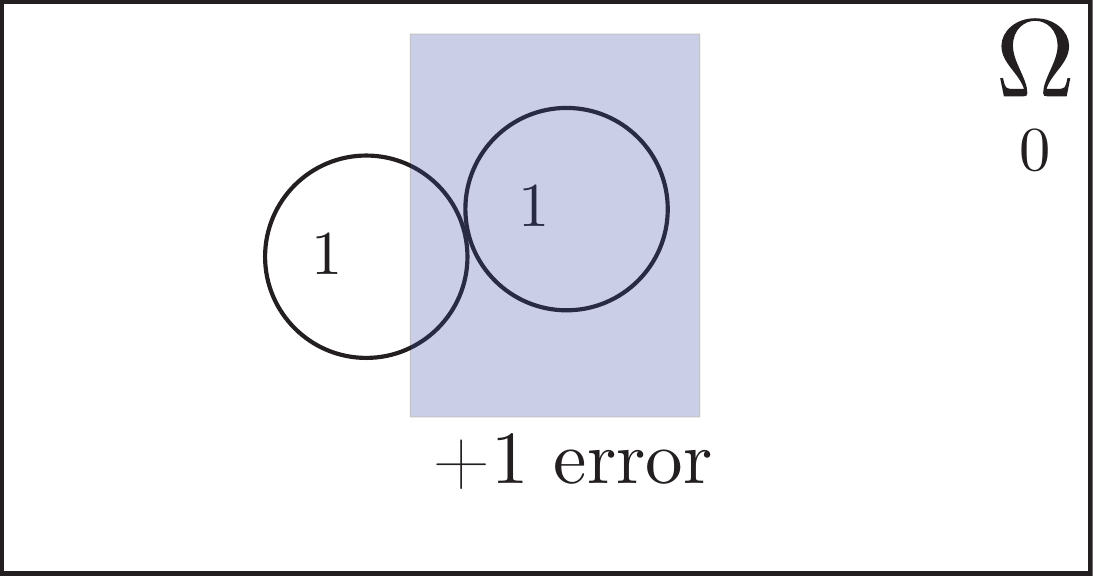}}\hspace*{3mm}                              
  \subfloat[3~targets, $\int_{\Omega}h\,d\chi=2$]{\label{sensor_field2}\includegraphics[width=0.31\textwidth]{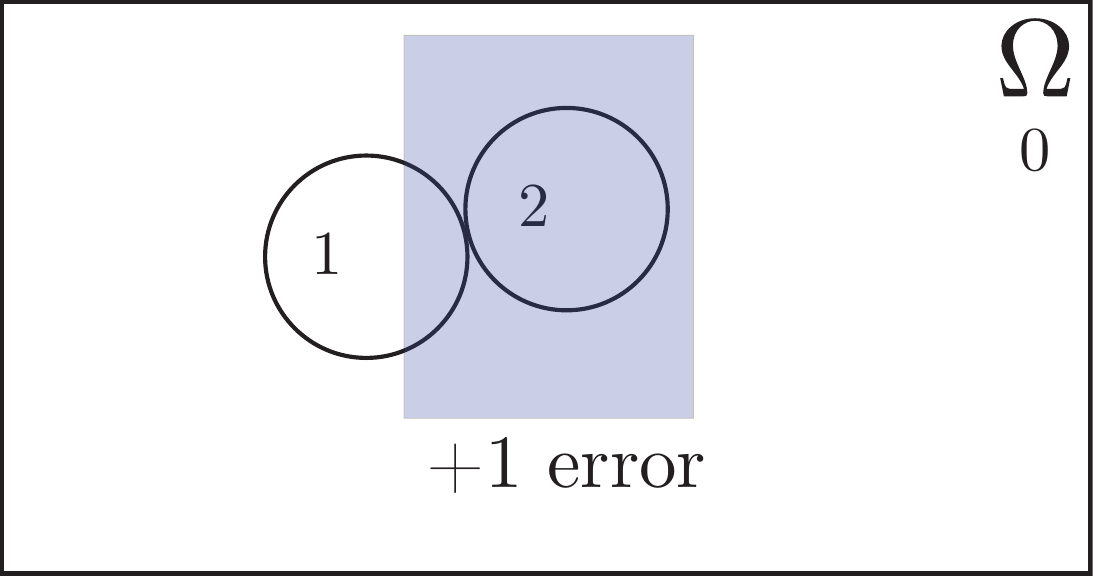}}\hspace*{3mm}                     
  \subfloat[3~targets, $\int_{\Omega}h\,d\chi=2$]{\label{sensor_field3}\includegraphics[width=0.31\textwidth]{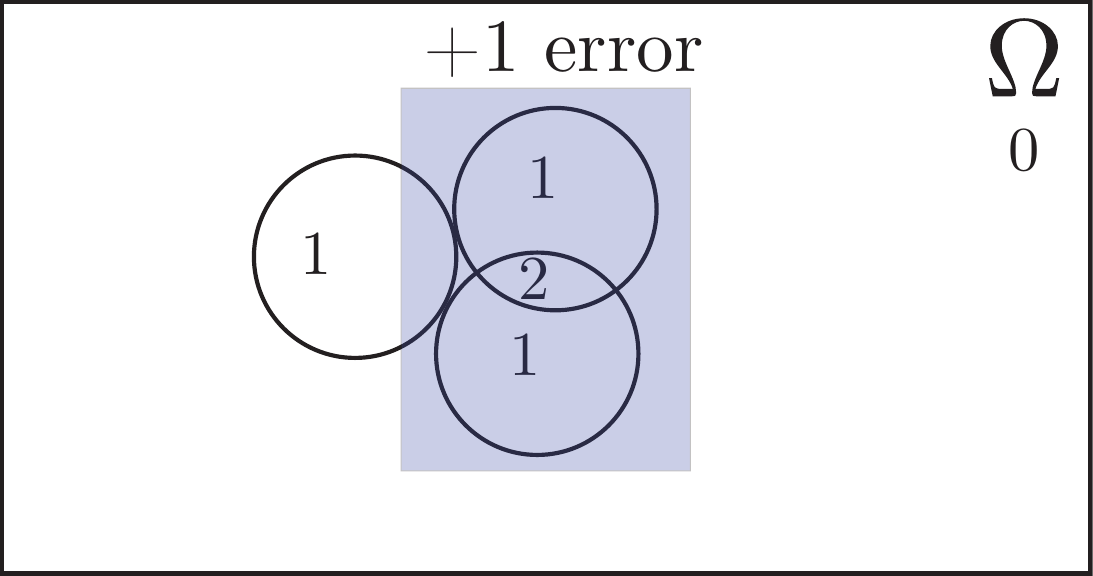}}  \\
  \subfloat[4~targets, $\int_{\Omega}h\,d\chi=3$]{\label{sensor_field4}\includegraphics[width=0.31\textwidth]{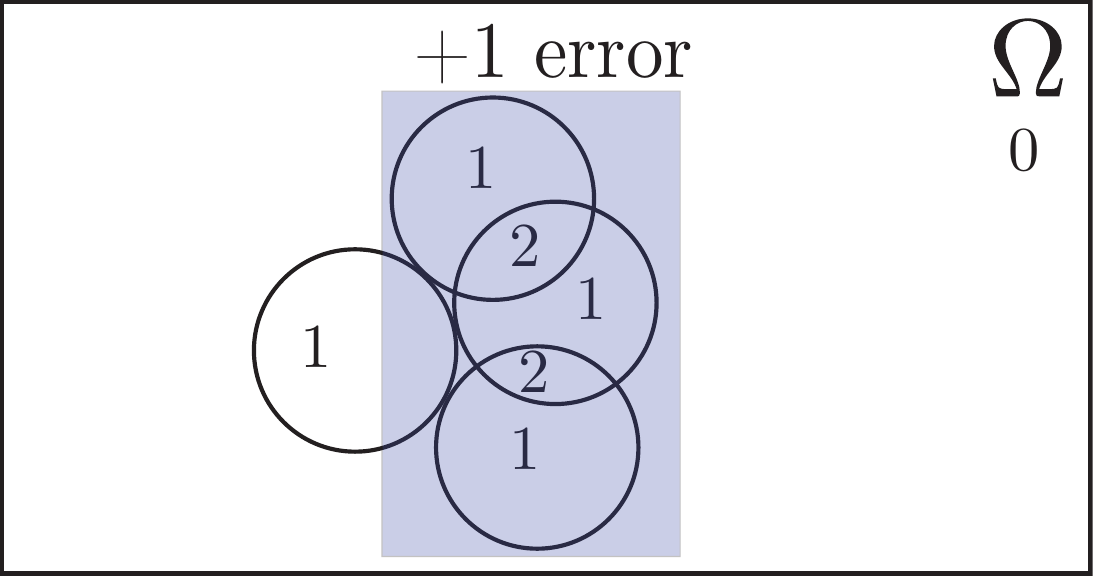}}\hspace*{3mm}                     
  \subfloat[5~targets, $\int_{\Omega}h\,d\chi=3$]{\label{sensor_field5}\includegraphics[width=0.31\textwidth]{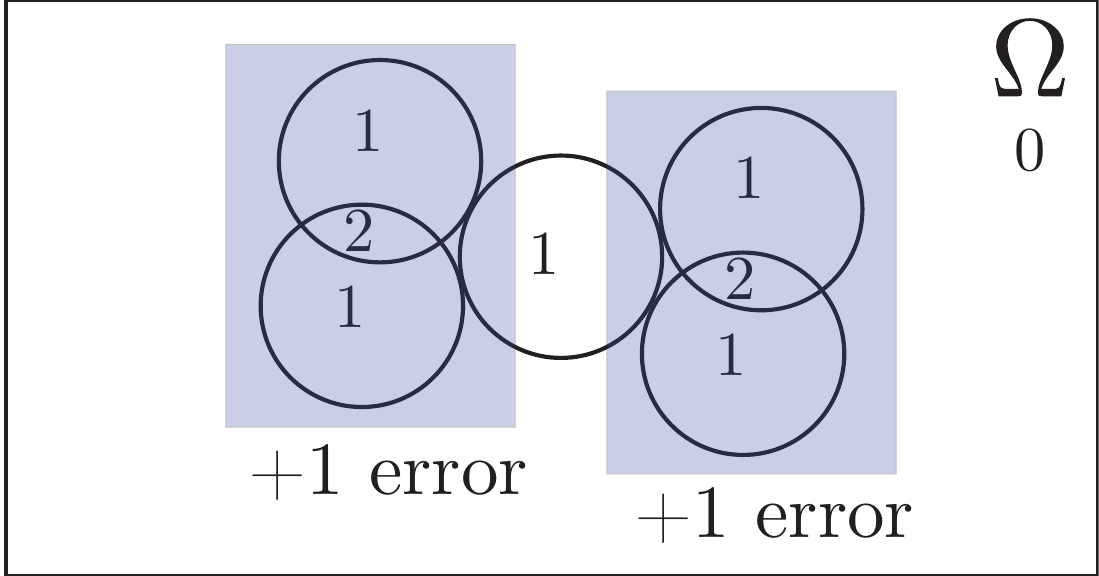}}                
              
  \caption{Above are the height functions returned by 5 different sensor fields, and their Euler characteristic integrals.}
  \label{fig4}
\end{figure}

In \ref{sensor_field1}, there is one tangency and one error. In \ref{sensor_field2}, however, there are two tangencies but still only one error. The third disk put down over the second \emph{discriminates} itself by creating an additional connected component in the $\{h=2\}$ level set. This is the key insight in our derivation of Theorem \ref{second_order_theorem}: going from \ref{sensor_field1} to \ref{sensor_field2}, the total number of errors does not increase. Appropriate generalizations also apply. We can shift the 3rd disk (see  \ref{sensor_field3}), and as long as it still discriminates itself against the 2nd disk the 3rd disk's tangency with the first disk does not lead to an error. Note similar phenomena in  \ref{sensor_field4} and  \ref{sensor_field5}.

On our way to proving Theorem \ref{second_order_theorem}, the above first suggests  the following lemma. 
\begin{lemma}
Assume we are given a discrete sensor field. Then $n$ disk-shaped targets of radius 6 are discretely and uniformly placed over it as assumed in the beginning of Section \ref{conditions}. Then,
\begin{equation}
\label{second_order1}
\mathrm{E}_n\approx\mathrm{E}_{n-1}+\frac{(n-1){C_{6}}}{\hat{A}}-\frac{c\cdot\mathrm{E}_{n-1}}{\hat{A}}
\end{equation}
\end{lemma}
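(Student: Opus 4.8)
The plan is to obtain \eqref{second_order1} as a one-step recurrence, comparing the expected error count for $n$ disks with that for $n-1$ by conditioning on where the last disk lands. Because the $n$ centers are independent and identically uniform on the region of area $A=(x-2r)(y-2r)$, deleting the $n$-th disk leaves the other $n-1$ distributed exactly as in the $(n-1)$-disk problem; hence I would write $\mathrm{E}_n=\mathrm{E}_{n-1}+\mathrm{E}(\Delta)$, where $\Delta$ denotes the change in the number of miscounted targets produced by inserting one more disk into a random arrangement of the remaining $n-1$. Everything then reduces to estimating $\mathrm{E}(\Delta)$, which I would split into a positive part coming from the tangencies the new disk creates and a negative part coming from the tangencies that discriminate rather than merge.

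For the positive part I would reuse the first-order computation of Section \ref{first_order_section}: the incoming disk comes tangent to any one fixed existing disk with probability $88_{corrected}/A$, so by linearity of expectation over the $n-1$ existing disks the expected number of fresh tangencies it forms is $(n-1)\,88_{corrected}/A$. Treating each such tangency, in isolation, as one additional miscount gives the middle term of \eqref{second_order1}.

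The negative part encodes the central observation of Figure \ref{fig4}: a new tangency need not raise the error count, because the incoming disk can discriminate itself by opening a fresh connected $2$-segment in the upper level sets instead of fusing with an existing component. This is exactly what happens from \ref{sensor_field1} to \ref{sensor_field2}, where a second tangency appears yet the integral still undercounts by only one. I would model the discriminating tangencies as those the new disk forms in the vicinity of the $\mathrm{E}_{n-1}$ errors already present: assigning to each existing error an effective catchment of area $c$ (a proportionality constant depending only on $r$) into which the new center must fall in order to discriminate, linearity of expectation makes the expected number of such error-free tangencies $c\,\mathrm{E}_{n-1}/A$. Subtracting this from the gross tangency count produces the net increment and hence \eqref{second_order1}.

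The main obstacle is justifying this discrimination term, and in particular the claim that a single constant $c$, independent of $n$ and of the detailed local geometry, governs every discrimination event. The step is genuinely approximate: it assumes each counted error is localized enough to carry a catchment region of essentially fixed area, that these regions neither overlap one another nor interact with more than one incoming tangency at a time, and that configurations repaired or created by simultaneous multi-disk interactions are negligible. I would argue for these assumptions from the local structure of the $\{h=2\}$ level set near a tangency together with the standing hypothesis that the disks are not too densely packed, under which competing discriminations and higher-order clusterings contribute only to the error absorbed by the symbol $\approx$. The surviving ingredients, namely the tangency probability from Section \ref{first_order_section} and two applications of linearity of expectation, are then routine.
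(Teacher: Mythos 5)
Your proposal is correct and follows essentially the same argument as the paper: both decompose the effect of inserting the $n$-th disk into a gross tangency term $(n-1)\,88_{corrected}/A$ obtained by linearity of expectation, minus a discrimination term $c\,\mathrm{E}_{n-1}/A$ that treats each existing error as a localized ``hotspot'' with an effective catchment of $c$ good positions where a new tangency fails to create an error. The only difference is presentational --- the paper builds up the recurrence by explicitly computing $\mathrm{E}_1$ through $\mathrm{E}_5$ and extracting the pattern, whereas you argue the general step directly (and are more explicit about the approximations hidden in the constant $c$, which the paper pins down as $21.455$ for radius $6$ by exhaustive search).
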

\begin{proof}
Put down the first target,
\[\mathrm{E}_1=0\]
Put down the 2nd target, and there is one target it can come tangent to,
\[\mathrm{E}_2=\frac{{C_{6}}}{\hat{A}}\]
Put down the 3rd target and there are two targets it can come tangent to in such a way as to create an error, remembering that some tangent positions will also cause the 3rd target to be discriminated. The number of such positions is related to the number of tangencies already present, the expected number of which we already calculated above. So,
\[\mathrm{E}_3\approx\frac{{C_{6}}}{\hat{A}}+\frac{2\cdot{C_{6}}}{\hat{A}}-c\cdot\frac{(\mbox{previous number of tangencies})}{\hat{A}},\]
by the uniform distribution of target centers, and where c is a proportionality constant. We are subtracting off from the probability that the 3rd target creates a tangency, the probability that the 3rd target does not create an error, for as noted going from \ref{sensor_field1} to \ref{sensor_field2} the number of errors does not increase.
\begin{eqnarray*}
\mathrm{E}_3&\approx&\frac{{C_{6}}}{\hat{A}}+\frac{2\cdot{C_{6}}}{\hat{A}}-c\left(\frac{{C_{6}}}{\hat{A}^2}\right) \\
&=&\frac{3\cdot{C_{6}}}{\hat{A}}-c\left(\frac{{C_{6}}}{\hat{A}^2}\right)
\end{eqnarray*}

For radii of 6, and given two tangent disks, there are on average (for the 88 possible setups, with a ``center disk'' held fixed) 21.455 points where a case such as either \ref{sensor_field2} or \ref{sensor_field3} can occur, so we take $c=21.455$ for second order errors (see Figure \ref{calculate:c2}). The value 21.455 is an exact value calculated by an exhaustive search of all possible points for a third disk, averaged over all 88 possible setups.
\begin{figure}
\centering
      \includegraphics[width=0.5\textwidth]{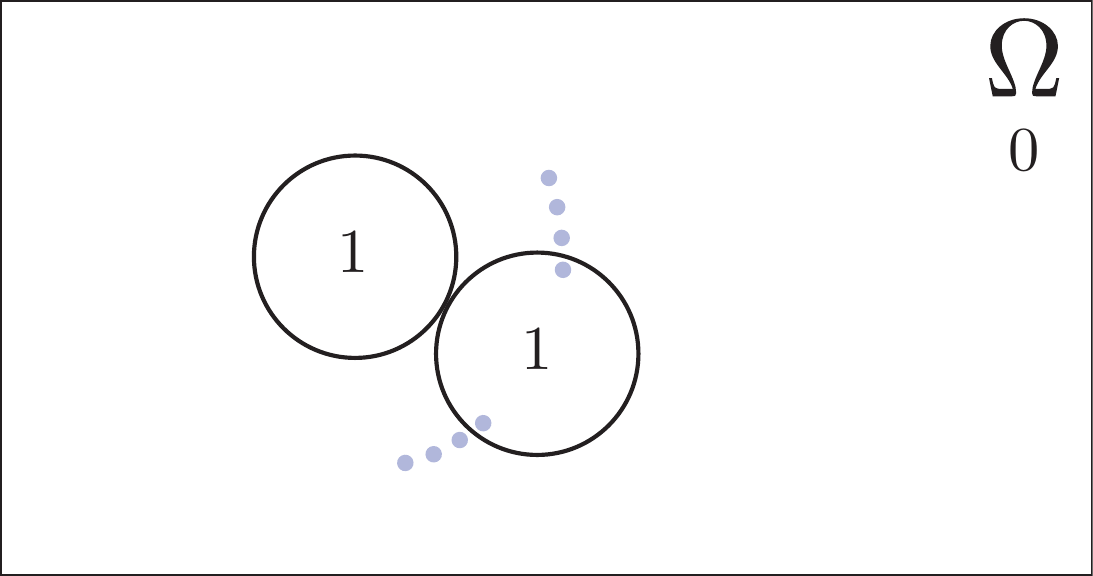}
  \caption{Around this cluster of targets, in our second order approximation with the left ``center'' disk held fixed with $h=1$, there are on average (for the 88 possible positions of the second disk) $c=21.455$ positions where a third disk could be centered such as to be tangent with the center disk, but not cause an error by being discriminated by the second disk. The dots represent a few of these ``good~points.''}
 \label{calculate:c2}
\end{figure}

Next, put down the 4th target. Note that in our second order-error world,  $\mathrm{E}_3$ equals the number of  \ref{sensor_field1}, \ref{sensor_field2}, or \ref{sensor_field3} type situations that are expected to occur.  We take each of these situations represented by $\mathrm{E}_3$ to have on average $c=21.455$ points around them where another disk can be centered such as to not cause an error, while still causing a tangency (see Figure \ref{calculate:c2}). So, as part of calculating the expected number of errors, when calculating the number of tangencies adding a 4th target is expected to create, we subtract off the probability of one of these tangencies being created by the 4th disk being centered at one of the $c\cdot\mathrm{E}_3$ ``good~points.'' So,
\begin{eqnarray*}
\mathrm{E}_4&\approx&\mathrm{E}_3+\frac{3\cdot{C_{6}}}{\hat{A}}-c\left(\frac{\mathrm{E}_3}{\hat{A}}\right) \\
&=&\frac{6\cdot{C_{6}}}{\hat{A}}-\frac{4c\cdot{C_{6}}}{\hat{A}^2}+\frac{c^{2}{C_{6}}}{\hat{A}^3}
\end{eqnarray*}

What we are studying here is the case where a ``center'' disk is held in a fixed position, has a height function of 1, and there is a 2nd disk tangent to this center disk.  A 3rd disk tangent to the center disk can be sufficiently near the first tangency such as to not cause an error -- so that this one grouping of three disks results in only one error. A fourth disk can be brought in tangent to the center disk, but still discriminate itself against the second or third disk. This one grouping still only contains one error.

There can be two tangencies to the first center disk which don't interact, and this creates two separate groupings, each accounting for one error (see Figure \ref{sensor_field5}). 

In summary, note how each tangency creates a hotspot around which tangencies can cluster with only the first one counting as an error. These spots are what we are counting. $\mathrm{E}_n$ is a count of these groupings, as each grouping equals one error, so when we want to predict how many ``good'' tangent spots there will be for the $(n+1)$th target to land, we know it will be approximately proportional to  $\mathrm{E}_n$. Continuing in this way,
\begin{eqnarray*}
\mathrm{E}_5&\approx&\mathrm{E}_4+\frac{4\cdot{C_{6}}}{\hat{A}}-c\left(\frac{\mathrm{E}_4}{\hat{A}}\right) \\
&=&\frac{10\cdot{C_{6}}}{\hat{A}}-\frac{10c\cdot{C_{6}}}{\hat{A}^2}+\frac{5c^{2}{C_{6}}}{\hat{A}^3}-\frac{c^{3}{C_{6}}}{\hat{A}^4}
\end{eqnarray*}
Finally,
\begin{equation*}
\mathrm{E}_n\approx\mathrm{E}_{n-1}+\frac{(n-1){C_{6}}}{\hat{A}}-\frac{c\cdot\mathrm{E}_{n-1}}{\hat{A}}
\end{equation*}
which is \eqref{second_order1}. 
\end{proof}
Continuing on with our proof of Theorem \ref{second_order_theorem}, note that \eqref{second_order1} is a first order linear inhomogeneous recurrence relation with variable coefficient(s). This class of recurrence relations has an elegant solution \cite{recurrencereference}, given by the following lemma.

\begin{lemma}[{\hspace{-.1 mm}\cite{recurrencereference}}]
The solution to a general first order linear inhomogeneous recurrence relation with variable coefficients in the form $a_{n+1} = f_n a_n + g_n \,$, $f_n \neq 0$ is given by 
\begin{align}
\label{recurrence:solution}
a_n & = \left(\prod_{k=0}^{n-1} f_k \right) \left(a_0 + \sum_{m=0}^{n-1}\frac{g_m}{\prod_{k=0}^m f_k}\right) \ \  \text{for}  \ n  \geq 1,
\end{align}
where $a_0$ is the initial term in the sequence $a_0,a_1, a_2, \dots$.
\end{lemma}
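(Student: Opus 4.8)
The final statement to prove is the closed-form solution \eqref{recurrence:solution} for a general first-order linear inhomogeneous recurrence relation with variable coefficients $a_{n+1} = f_n a_n + g_n$ with $f_n \neq 0$. The plan is to proceed by induction on $n$, verifying that the proposed formula satisfies both the base case and the recurrence step. This is the cleanest route because the formula is already handed to us; we need only confirm it is a solution (and uniqueness is immediate once the initial term $A_0 = a_0$ is fixed, since each subsequent term is determined by the recurrence).

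First I would establish the base case. Setting $n = 0$ in \eqref{recurrence:solution}, the empty product $\prod_{k=0}^{-1} f_k$ equals $1$ and the empty sum $\sum_{m=0}^{-1}$ equals $0$, so the right-hand side collapses to $A_0$. Identifying $A_0 = a_0$ confirms the formula reproduces the correct initial value. Next I would carry out the inductive step: assuming the formula holds for $a_n$, I would substitute it into the recurrence $a_{n+1} = f_n a_n + g_n$ and show the result matches \eqref{recurrence:solution} with $n$ replaced by $n+1$. Concretely, multiplying the induction hypothesis by $f_n$ pulls $f_n$ inside the leading product to give $\prod_{k=0}^{n} f_k$, and adding $g_n$ is absorbed into the sum by observing that
\begin{align*}
g_n &= \left(\prod_{k=0}^{n} f_k\right)\cdot\frac{g_n}{\prod_{k=0}^{n} f_k},
\end{align*}
which is exactly the new $m = n$ term of the sum in \eqref{recurrence:solution}. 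Collecting terms then yields the claimed expression for $a_{n+1}$.

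An alternative, perhaps more illuminating, approach is to derive the formula directly rather than merely verify it. Here I would divide the recurrence by the product $\prod_{k=0}^{n} f_k$ to obtain a telescoping form: writing $b_n = a_n / \prod_{k=0}^{n-1} f_k$, the recurrence becomes $b_{n+1} = b_n + g_n / \prod_{k=0}^{n} f_k$, which sums trivially to give $b_n = b_0 + \sum_{m=0}^{n-1} g_m / \prod_{k=0}^{m} f_k$; multiplying back through by the product recovers \eqref{recurrence:solution}. This is the discrete analogue of the integrating-factor method for first-order linear ODEs, and it makes transparent why the hypothesis $f_n \neq 0$ is essential, since the substitution divides by the $f_k$.

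I expect the main obstacle to be purely bookkeeping rather than conceptual: managing the empty product and empty sum conventions at the boundary, and correctly tracking how the index ranges on the products and sums shift when passing from $n$ to $n+1$. Provided these conventions are stated explicitly at the outset, the verification is routine. Since this lemma is quoted from \cite{recurrencereference}, I would in practice present only the short inductive verification, or simply cite the reference, and then move immediately to applying \eqref{recurrence:solution} to the specific recurrence \eqref{second_order1}.
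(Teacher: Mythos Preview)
Your proposal is correct. Your ``alternative'' telescoping approach is in fact exactly the route the paper takes: divide through by $\prod_{k=0}^{n} f_k$, set $A_n = a_n / \prod_{k=0}^{n-1} f_k$, obtain $A_{n+1} - A_n = g_n / \prod_{k=0}^{n} f_k$, sum, and multiply back. Your primary suggestion, induction on $n$, is a genuinely different (and equally valid) argument: it verifies the handed-down formula rather than deriving it. The inductive route is shorter and requires only careful index bookkeeping, while the telescoping route is more constructive and, as you note, exposes the role of the hypothesis $f_n \neq 0$ and the analogy with the integrating-factor method for linear ODEs. Either is acceptable here; since the lemma is cited from \cite{recurrencereference}, the brevity of the inductive check would also be appropriate.
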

\begin{proof}
The starting point for this derivation is $a_{n+1} = f_n a_n + g_n \,$, $f_n \neq 0$. We subtract one term and divide by $\displaystyle \prod_{k=0}^n f_k$,

\begin{align}
\frac{a_{n+1}}{\prod_{k=0}^n f_k} - \frac{f_n a_n}{\prod_{k=0}^n f_k} &= \frac{g_n}{\prod_{k=0}^n f_k} \nonumber \\
\frac{a_{n+1}}{\prod_{k=0}^n f_k} - \frac{a_n}{\prod_{k=0}^{n-1} f_k} &= \frac{g_n}{\prod_{k=0}^n f_k} \nonumber
\end{align}
Let
\[
 A_n=
  \begin{cases}
  \normalsize{ \frac{a_n}{\prod_{k=0}^{n-1} f_k}} & \text{if } n \geq 1\\
   a_0       & \text{if } n=0
  \end{cases}
\]

Then,
\begin{align}
A_{n+1} - A_n &= \frac{g_n}{\prod_{k=0}^n f_k} \nonumber \\
\sum_{m=0}^{n-1}(A_{m+1} - A_m) &= A_n - A_0 = \sum_{m=0}^{n-1}\frac{g_m}{\prod_{k=0}^m f_k} \nonumber \\
\frac{a_n}{\prod_{k=0}^{n-1} f_k} &= A_0 + \sum_{m=0}^{n-1}\frac{g_m}{\prod_{k=0}^m f_k} \nonumber \\
a_n & = \left(\prod_{k=0}^{n-1} f_k \right) \left(A_0 + \sum_{m=0}^{n-1}\frac{g_m}{\prod_{k=0}^m f_k}\right),  \nonumber
\end{align}
which is \eqref{recurrence:solution}, the result we desired.

\end{proof}
We are now ready to prove Theorem \ref{second_order_theorem}.
\begin{proof}
In \eqref{recurrence:solution} take $a_n=\mathrm{E}_n=\mathrm{E}_{n-1}\left(1-\frac{c}{\hat{A}}\right)+\frac{(n-1){C_{6}}}{\hat{A}}$,  then substitute $n \mapsto n + 1$ in this relation to get $a_{n+1}=\mathrm{E}_{n+1}=\mathrm{E}_{n}\left(1-\frac{c}{\hat{A}}\right)+\frac{n{C_{6}}}{\hat{A}}$. Then we compare this equation with $a_{n+1} = f_n a_n + g_n \,$, and take $f_n=\left(1-\frac{c}{\hat{A}}\right)$ and $g_n=\frac{n{C_{6}}}{\hat{A}}$. Finally, noting that $A_0=0$ as $a_0=0$ (if no targets are put down, there are no errors), we plug what we know into \eqref{recurrence:solution} to get,
\begin{eqnarray}
\mathrm{E}_n&\approx&\left(1-\frac{c}{\hat{A}}\right)^n\left(\sum_{m=0}^{n-1}\frac{\frac{m{C_{6}}}{\hat{A}}}{\left(1-\frac{c}{\hat{A}}\right)^{m+1} }\right)\\
&=&\frac{{C_{6}}}{\hat{A}}\cdot\left(1-\frac{c}{\hat{A}}\right)^{n-2}\sum_{m=1}^{n-1}m\left(\frac{1}{1-\frac{c}{\hat{A}}}\right)^{m-1} \label{unsummed}
\end{eqnarray}
This is in the form of $\displaystyle\sum_{k=1}^{n}kr^{k-1}$, a solution to which can be found by differentiating the standard geometric series formula with respect to $r$,
\begin{eqnarray}
\sum_{k=0}^{n}ar^k &= &\frac{a(1-r^{n+1})}{1-r}\nonumber\\ 
\label{sum_formulae}
\frac{d}{dr}\sum_{k=0}^nr^k &= &\sum_{k=1}^n kr^{k-1}=
\frac{1-r^{n+1}}{(1-r)^2}-\frac{(n+1)r^n}{1-r}
\end{eqnarray}
Applying \eqref{sum_formulae} to \eqref{unsummed} gives, after some algebra,

\begin{equation*}
\mathrm{E}_n\approx\frac{{C_{6}}}{c^{2}}\left(\left(1-\frac{c}{\hat{A}}\right)^{n}\cdot \hat{A}-\hat{A}+nc\right)
\end{equation*}

which is a simple solution to our recurrence relation \eqref{second_order1}, and is the desired result~Theorem~\ref{second_order_theorem}.
\end{proof}

\subsection{Asymptotic Behavior}
\label{Asymptotic_Behavior}
Given an understanding of how errors in target enumeration accumulate for small $n$, the question then arises, how does Theorem \ref{theoretical_target_enumeration_theorem} behave for large $n$? In answering this question, a connection arises to a problem in probability called the \emph{coupon collector's problem}. Given a set of $p$ coupons of all different types, how many coupons does the coupon collector expect to need to draw with replacement before having at least one copy of each type of coupon? A similar question is, ``How many coupons does the coupon collector expect to need to draw with replacement before having at least $q$ copies of each type of coupon?'' A solution to this more difficult multiple coupon problem is given by \cite{shepp}. The link between this multiple coupon problem and Euler characteristic integrals is seen if we note that for some integer $s$ the $\{h=s\}$ level set covers all of our sensor field $\Omega$, then by Theorem \ref{theoretical_target_enumeration_theorem} this level set contributes $s$ units to the target enumeration count. If, for instance, each sensor in a sensor field picks up  10 targets, then the $\{h=10\}$ level set will cover all of $\Omega$, and Theorem \ref{theoretical_target_enumeration_theorem} will return a target count of 10.  In \cite{shepp} it is shown that for large $q$, the expected number of coupons the coupon collector needs is asymptotic to $pq$ by the law of large numbers. Recall that our sensor field is a subset $\Omega \subset \mathbb{R}^2 $ and $n$ targets are placed on our sensor field uniformly and discretely. Thus, for a regular grid of sensors  that is $x$ sensors long and $y$ sensors wide, and for targets that are the same shape and size of the sensors and discretely and uniformly  placed over the centers of sensors, this is saying that in the limit as $\ n \to \infty$, every $xy$ targets placed over the sensor field increases by one the number of the highest level set that is expected to cover all of $\Omega$. Then, in the limit as $\ m \to \infty$ if you put down $mxy$ targets,  you would expect the $\{h=m\}$ level set to cover $\Omega$, which would account for $m x  y$ targets, leaving a total of $m x  y-m x  y=0$ targets to fill the higher level sets. In other words, our examination shows that, in the limit as $\ m \to \infty$, if you put down  $mxy$ targets, the highest non-empty level set of $h$ covers $\Omega$, and all higher level sets are empty and do not contribute to the Euler characteristic integral. 

\begin{theorem}[Asymptotic behavior of sensor fields]
\label{asymptotic_thm}
For any sensor field (in a region $\Omega$), and for any (fixed) target shape, let $h'$ be the height function which is the return of the sensor field when each sensor has a target centered over it. Let $H= \int_{\Omega}h'\,d\chi$. Let $\alpha$ be the number of sensors in our sensor field.  Then when $n$ targets are uniformly and discretely put down on our sensor field, where $n=m\cdot \alpha$ for some positive integer m, and where $h$ is the height function coming off this sensor field, we have

\begin{align}
\mathrm{E}\left(\int_{\Omega}h\,d\chi\right)=Hm, \ \  \text{as}  \ n \to \infty
\label{asymptotic_result}
\end{align}
\end{theorem}
\begin{proof}
This follows from  \eqref{linearity_relation} and the above reasoning in this section (Section \ref{Asymptotic_Behavior}). Note that when we put down $m\cdot \alpha$ targets, in the limit as $ \ m \to \infty$ we expect each sensor to have $m$ targets above it. Thus, in the limit $\mathrm{E}(h)=m\cdot h'$. Then by~\eqref{linearity_relation} $\int_{\Omega}m \cdot h'\,d\chi= m\int_{\Omega}h'\,d\chi =Hm.$
\end{proof}

\subsection{Higher order error estimates and some applications}
\label{applications}
To improve our second order  error estimate, we try to force the error formula to be asymptotically correct (see  Theorem \ref{asymptotic_thm}). 

Let $f_c(n)=\left(n-\frac{{C_{6}}}{c^{2}}\left(\left(1-\frac{c}{\hat{A}}\right)^{n}\cdot \hat{A}-\hat{A}+nc\right)\right)$. If we take the limit of  $f_c(n)$ as $n \to \infty$, then

\begin{align*}
\lim_{n\to\infty}\left(n-\frac{{C_{6}}}{c^{2}}\left(\left(1-\frac{c}{\hat{A}}\right)^{n}\cdot \hat{A}-\hat{A}+nc\right)\right) &= \\
 \lim_{n\to\infty} \left(n\cdot \left(1-\frac{{C_{6}}}{c}\right)+\frac{\hat{A}\cdot{C_{6}}}{c^2}\right),
\end{align*}
when $0<c<\hat{A}$. Then, to force the asymptotic result \eqref{asymptotic_result}, we let 
 
 \begin{align}
c=\frac{{C_{6}}}{1-\frac{H}{\alpha}}
\end{align}

such that 
\begin{align*}
\lim_{n\to\infty}\left(n-\frac{{C_{6}}}{c^{2}}\left(\left(1-\frac{c}{\hat{A}}\right)^{n}\cdot \hat{A}-\hat{A}+nc\right)\right) &= \\
Hm+O(1), \ \  \text{as}  \ n \to \infty
\end{align*}

which is in agreement with \eqref{asymptotic_result} up to a constant term. Then, note that $f_c(n)$ increases as $c$ increases, such that $f_c(n)$ increases when $\frac{H}{\alpha}$ increases. Thus, $\frac{H}{\alpha}$ represents how poorly our targets clump together -- the quantity $\frac{H}{\alpha}$ is a measure of the ``clumpiness'' of our targets, or how poorly our targets overlap each other when placed near each other. A large $\frac{H}{\alpha}$ value means targets do not clump together well. Targets that clump together more easily lead our expected Euler characteristic integral to decrease, while targets which clump together less easily would make for larger expected Euler characteristics. For instance, targets which are $1\times 1$ squares would, when placed down densely on a discrete sensor grid, meld together and become impossible to differentiate, such that the quantity $\frac{H}{\alpha}$ would be small. For targets which have more complex geometries, such as discrete disks, we would expect $\frac{H}{\alpha}$ to be larger.

We find that \eqref{final_second_order} with $c=\frac{{C_{6}}}{1-\frac{H}{\alpha}}$ acts a good estimate of the expected value of the Euler characteristic integral. For values of $n$ ranging from 100 to $10\,000$, we simulated, for each $n$, $1\,000$ sensor fields of dimension $500\times 500$, where on each field $n$ radius $6$ targets were  placed down discretely and under a uniform distribution. Then, for each random sensor field we calculated its Euler characteristic integral, and averaged our observed values of the integral for each $n$ (average of $1\,000$ trials). Then, in order to calculate the expected Euler characteristic integral for each $n$, we first calculated $H$ by constructing a  $500\times500$ sensor field with a disk centered at each of the $240\,100$ points sufficiently far from the boundary, and then integrating over this field. This process gives $H=109$. 

See Table \ref{summary:formulas} and Figure \ref{numerical:results:plot}  for a comparison of our expected Euler characteristics to the observed numerical results. Observed data were generated from a virtual sensor field environment created in GNU Octave \cite{eaton:2002}. We find that with $c=\frac{{C_{6}}}{1-\frac{H}{\alpha}}$, our second order error estimate formula  \eqref{final_second_order} behaves excellently for large $n$, excellently for small $n$ (when $f_c(n)$ is insensitive to the value of $c$), and it behaves acceptably for midrange values of $n$.

In order to derive an improved point estimator for $n$, we note that for a fixed target radius and a fixed sensor field area, and with ${C_{6}}\leq c<\hat{A}$, the function defined by taking $n$ to the value given by
\begin{equation}
n-\frac{{C_{6}}}{c^{2}}\left(\left(1-\frac{c}{\hat{A}}\right)^{n}\cdot \hat{A}-\hat{A}+nc\right) \label{injective}
\end{equation}
 is injective.

Arguing by contradiction, we assume there are $n_1$ and $n_2$, $n_1\neq n_2$, and $n_1>n_2$, such that,
\begin{align*}
n_1-\frac{{C_{6}}}{c^{2}}\left(\left(1-\frac{c}{\hat{A}}\right)^{n_1}\cdot \hat{A}-\hat{A}+{n_1}c\right)&=n_2-\frac{{C_{6}}}{c^{2}}\left(\left(1-\frac{c}{\hat{A}}\right)^{n_2}\cdot \hat{A}-\hat{A}+{n_2}c\right)\\
\shortintertext{then}
\underbrace{\frac{(n_1-n_2)}{\hat{A}}\left(\frac{c^2}{{C_{6}}}-c\right)}_{\mathclap{\geq0 \text{ by assumptions}}}&=\underbrace{\left(1-\frac{c}{\hat{A}}\right)^{n_1}-\left(1-\frac{c}{\hat{A}}\right)^{n_2}}_{\mathclap{<0 \text{ by assumptions}}}
\end{align*}
which under our assumptions (${C_{6}}\leq c<\hat{A}$ and $n_1>n_2$) is a contradiction. Therefore, $n_1=n_2$ and the function which takes $n$ to the value given in \eqref{injective} is injective.

Thus, given an observed value $n_{\text{obs}}$ of $\int_{ \mathbb{R}^2}h\,d\chi$, we can solve 
\begin{equation}
\label{improved_estimate}
n_{\text{obs}}=\hat{n}-\frac{{C_{6}}}{c^{2}}\left(\left(1-\frac{c}{\hat{A}}\right)^{\hat{n}}\cdot \hat{A}-\hat{A}+\hat{n}c\right)
\end{equation}
numerically for $\hat{n}$, and obtain a better estimate $\hat{n}$ for $n$ than simply $n_{obs}$. As \eqref{final_second_order} is approximately unbiased, this method of obtaining an improved estimate of $n$ is also approximately unbiased.

Here is one immediate application of the work presented thus far. Imagine a box which has along its bottom a sensor field as described in this paper (made up of a regular grid of sensors). Under the center of each sensor is a magnet. Then, drop a handful of disk-shaped magnets into the box. The magnets under the sensors will attract the disks, and the disks will appear to fall into the box under a discrete uniform distribution (we imagine here that the sides of the box are inelastic, so that disks don't bounce off the sides). Then, the Euler characteristic integral will give us an approximation for the number of disks in the box. We can then shake the box, and arrive at a new uniform distribution of targets, and a new value of the Euler characteristic integral. We can repeat this process many times, calculating a new value of the Euler characteristic integral each time. Then, we can average our Euler characteristic integral values, and plug this average into (\ref{improved_estimate}) as our $n_{\text{obs}}$. This will give us a value $\hat{n}$, which will be a very close estimate for the true number of disk-shaped magnets that are in the box! Note that this process works no matter how many, or how few sensors are at the bottom of the box (many, many, small sensors would work, and so would a few large sensors).

Looking to expand our assumptions, an interesting case is when the targets' centers are continuously placed on our discrete sensor field -- a much more useful application than the discrete placement case. Imagine, for example, a discrete sensor field in front of you. You drop a pile of quarters onto it. Almost surely, not a single quarter will land with its center exactly at a point where four sensors meet; each quarter has the opportunity to drift in one direction or another. The slightest perturbation in a quarter's center coordinate away from an integer will cause it to land on sensors such as to distort its perceived shape and cause new and unforeseen errors (see Figure \ref{continuousplacementfig}). Research needs to be done into this continuous placement case.

\begin{figure}
\centering
      \includegraphics[width=0.2\textwidth]{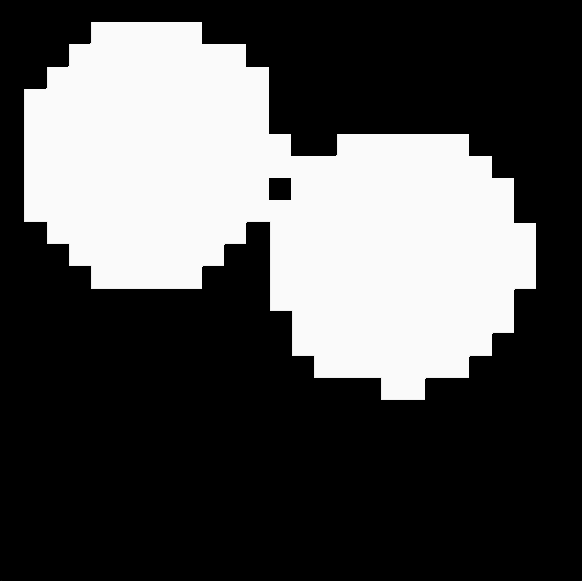}
  \caption{This is the level set generated from two disks of radius 6 falling onto a discrete sensor field with continuous placement of their centers. New behavior arises in this case. Here  $\int_{\Omega}h\,d\chi=0$, which does not occur with discrete placement.}
 \label{continuousplacementfig}
\end{figure}

Looking to further expand past our assumptions, the case when $r>7$ is most interesting. With $r>7$ disks placed discretely on a discrete sensor field, more than simple tangencies can occur. Discretized disks of larger radii can exhibit concave behavior when two identical disks join in certain precise ways to create a single element in the level sets with an Euler characteristic of zero. Larger disks can also exhibit another type of ``locking behavior'' where two disks join together to have an integral with respect to the Euler characteristic of \emph{three} (see Figure \ref{examples:of:discrete:placement:errors}). The number of positions around a disk that another disk's center can be placed to cause these different effects varies from radius to radius, and no pattern seems to emerge (see Table \ref{error:types:positions} and Figure \ref{error:types:positions:plot}). 

An analysis similar to that done for radii less than 8 should be applicable to greater radii.

\begin{table}[tp]
\begin{center}

     \singlespaced
     \vspace{-5mm}
     \scalebox{0.75}{
     \begin{tabular}{| l | c | c | c | c | c | c | r |}
     
    \hline
    
    &\multicolumn{3}{c|}{Error Type}  & &\multicolumn{3}{c|}{Error Type} \\
   \cline{2-4}  \cline{6-8}
  radius&0&1&3&radius&0&1&3 \\
  \hline
1&0&8&0&51&168&584&80\\ 
2&0&24&0&52&152&600&88\\ 
3&0&40&0&53&112&584&96\\ 
4&0&56&0&54&136&576&88\\ 
5&0&72&0&55&120&632&80\\ 
6&0&88&0&56&200&600&96\\ 
7&0&104&0&57&176&632&112\\ 
8&8&112&8&58&248&632&112\\ 
9&0&136&0&59&176&632&88\\ 
10&16&136&16&60&200&696&112\\ 
11&8&160&8&61&184&624&120\\ 
12&8&176&8&62&176&680&112\\ 
13&16&176&16&63&192&672&104\\ 
14&8&208&8&64&248&704&112\\ 
15&24&200&24&65&216&744&88\\ 
16&16&224&16&66&192&688&96\\ 
17&32&208&32&67&184&720&88\\ 
18&40&248&32&68&280&728&152\\ 
19&16&272&16&69&224&712&136\\ 
20&32&256&32&70&208&752&128\\ 
21&16&288&16&71&200&704&112\\ 
22&56&280&48&72&184&736&112\\ 
23&40&312&32&73&280&768&112\\ 
24&32&320&32&74&184&768&104\\ 
25&48&304&48&75&328&752&152\\ 
26&56&344&48&76&208&752&128\\ 
27&72&336&56&77&232&824&120\\ 
28&48&352&48&78&288&792&128\\ 
29&32&376&32&79&216&792&112\\ 
30&72&392&48&80&264&768&128\\ 
31&48&408&40&81&304&856&128\\ 
32&88&416&48&82&248&832&152\\ 
33&48&400&56&83&320&840&152\\ 
34&72&408&48&84&240&848&128\\ 
35&88&424&64&85&376&888&136\\ 
36&56&440&56&86&312&864&152\\ 
37&96&440&56&87&352&832&168\\ 
38&72&472&48&88&264&864&136\\ 
39&136&456&80&89&224&856&152\\ 
40&104&488&64&90&336&888&160\\ 
41&80&488&72&91&336&920&176\\ 
42&96&504&48&92&320&872&160\\ 
43&56&536&56&93&304&944&120\\ 
44&144&512&80&94&376&952&152\\ 
45&104&544&64&95&368&912&184\\ 
46&152&496&88&96&352&960&152\\ 
47&152&552&88&97&344&904&168\\ 
48&96&536&80&98&400&1008&128\\ 
49&136&520&104&99&448&912&200\\ 
50&112&632&72&100&408&952&184 \\
     \hline
    \end{tabular}
    }
    
        \end{center}
\singlespaced
\vspace{-6mm}
  \caption{For varying radius and discrete placement with integer centers, the above table gives the number of positions around a discrete disk where, when another identical disk is placed there,  $\int_{ \mathbb{R}^2}h\,d\chi=$0, 1, or 3 (which we call type 0, 1, or 3 errors).}
  \label{error:types:positions}
\end{table}

\begin{figure}
  \centering
  \subfloat[\tiny{2~targets~of~radius~8, $\int_{\Omega}h\,d\chi=0$}]{\label{radius_8_type_0_error}\includegraphics[width=0.22\textwidth]{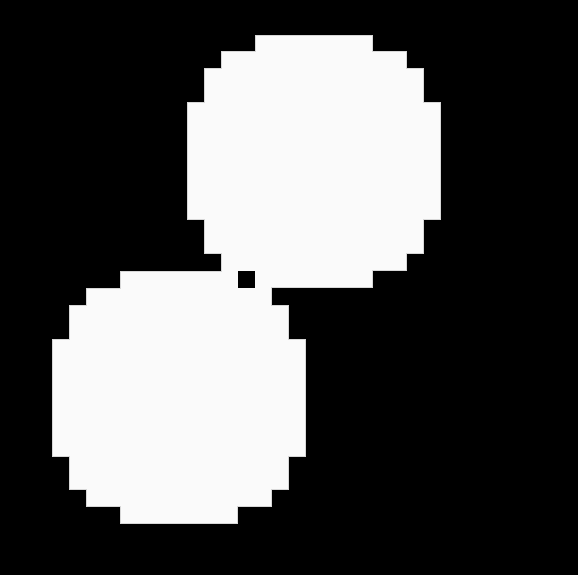}} 
   \hspace{.1cm}              
 \subfloat[\tiny{2~targets~of~radius~8, $\int_{\Omega}h\,d\chi=3$}]{\label{radius_8_type_3_error}\includegraphics[width=0.22\textwidth]{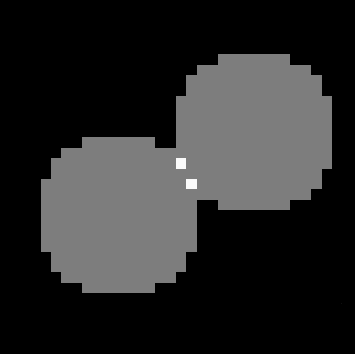}}
 \hspace{.1cm}                   
  \subfloat[\tiny{2~targets~of~radius~10, $\int_{\Omega}h\,d\chi=0$}]{\label{radius_10_type_0_error}\includegraphics[width=0.22\textwidth]{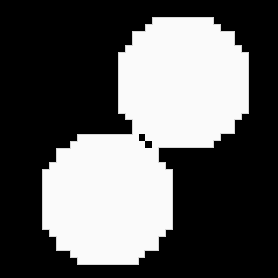}}
  \hspace{.1cm}     
  \subfloat[\tiny{2~targets~of~radius~10, $\int_{\Omega}h\,d\chi=3$}]{\label{radius_10_type_3_error}\includegraphics[width=0.22\textwidth]{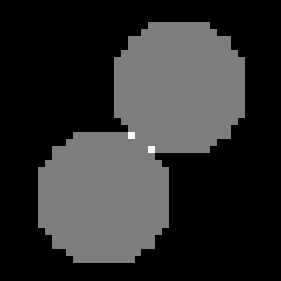}}                
                       
  \caption{Above are examples of the ways targets of radii $>7$ can join together to create different types of errors when evaluating $\int_{\Omega}h\,d\chi$ over a discrete sensor field.}
  \label{examples:of:discrete:placement:errors}
\end{figure}

\begin{figure}[tp]
  \begin{center}
 \hspace*{-20pt}\scalebox{.7}{\input{error_graph.tex}}
 \caption{A plot of table \ref{error:types:positions}.}
 \label{error:types:positions:plot}
  \end{center}
\end{figure}

\begin{table}[tp]
\begin{center}
\singlespaced
     \begin{tabular}{| l | c | p{2.58cm} | p{2.6cm} | p{2.7cm} |}
    \hline
  n&observed&Formula~\eqref{tangency_formula}&Formula~\eqref{final_second_order}, $\textstyle{c=21.455}$&Formula~\eqref{final_second_order}, $\textstyle{c=\frac{{C_{6}}}{1-\frac{H}{\alpha}}}$\vspace{2 mm}\\
  \hline
100&98.294&98.25&98.2&98.3\\
200&193.12&192.95&193.0&193.1\\
300&284.72&284.05&284.2&284.6\\
400&373&371.65&372.0&372.9\\
500&458.5&455.65&456.3&458.2\\
600&541.04&536.15&537.3&540.4\\
700&619.55&613.05&614.8&619.8\\
800&697.11&686.45&689.1&696.4\\
900&771.23&756.25&760.0&770.4\\
1\:000&843.67&822.5&827.7&841.8\\
2\:000&1\:446.2&1\,289.65&1\,330.1&1\,431.8\\
3\:000&1\:881.5&1\,401.4&1\,535.1&1\,845.4\\
4\:000&2\:178.9&1\,157.85&1\,468.1&2\,135.3\\
5\:000&2\:390.0&558.9&1\,152.4&2\,338.7\\
6\:000&2\:524.1&-395.4&609.1&2\,481.3\\
7\:000&2\:611.1&-1\,776.6&-142.1&2\,581.4\\
8\:000&2\:663.2&-3\,370.1&-1\,083.6&2\,651.6\\
9\:000&2\:689.8&-5\,390.5&-2\,199.1&2\,701.0\\
10\:000&2\:703.2&-7\,766.25&-3\,473.7&2\,735.8\\
     \hline
    \end{tabular}
    \end{center}
  \caption{For a $500\times500$ discrete sensor field with $n$ radius 6 disks put down uniformly and discretely, the observed value (averaged over 1\:000 trials for each $n$) of $\int_{ \mathbb{R}^2}h\,d\chi$ is presented, along with $\mathrm{E}\left(\int_{ \mathbb{R}^2}h\,d\chi\right)$ calculated using the first order \eqref{tangency_formula} and then the second order  \eqref{final_second_order} (Theorem \ref{second_order_theorem}) with the second order value $c=21.455$ and with the higher order value $c=\frac{{C_{6}}}{1-\frac{H}{\alpha}}$. For this region, ${C_{6}}=85.322$.}
  \label{summary:formulas}
\end{table}

\begin{figure}[tp]
  \begin{center}
\subfloat[Plot of observed values.]{\label{observed_plot}\includegraphics[width=0.44\textwidth]{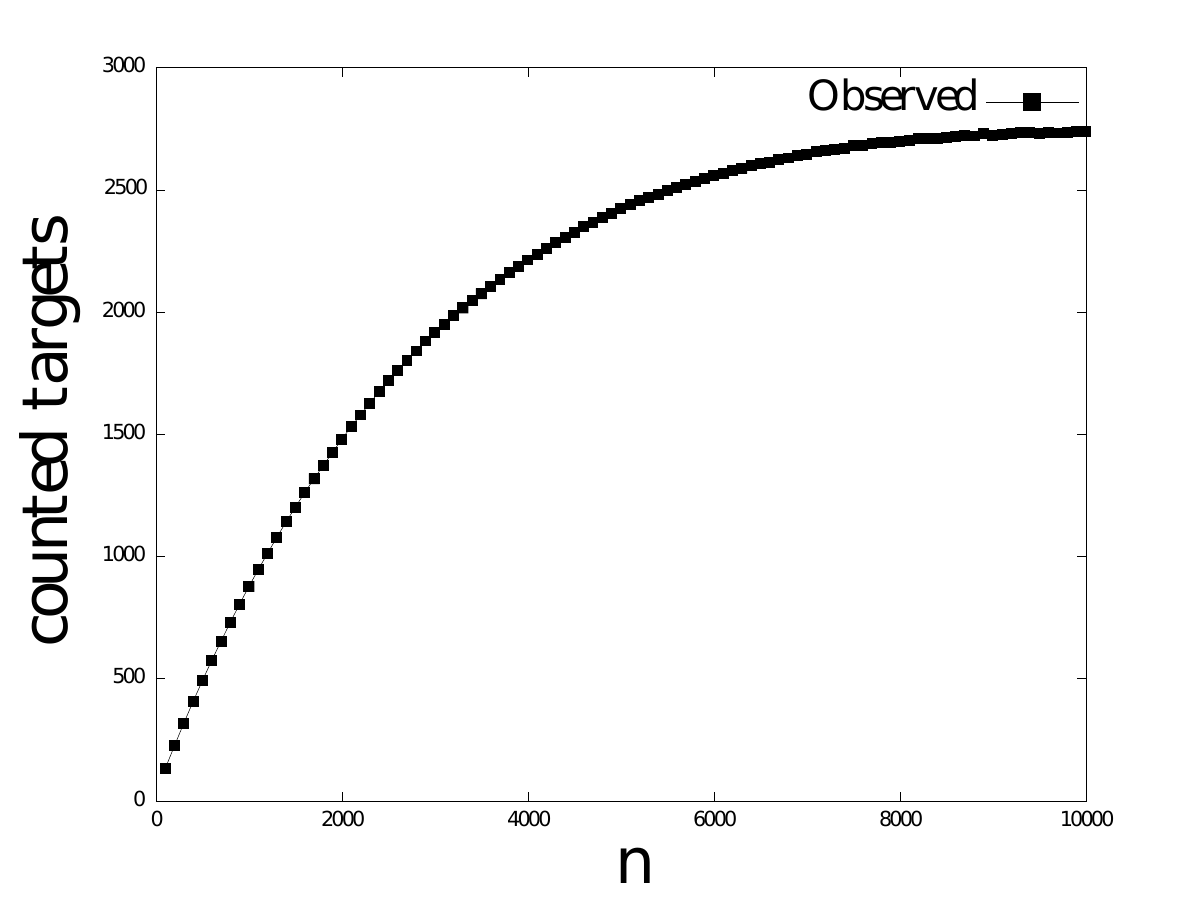}}                
 \subfloat[Plot of our best estimates.]{\label{best_estimate_plot}\includegraphics[width=0.44\textwidth]{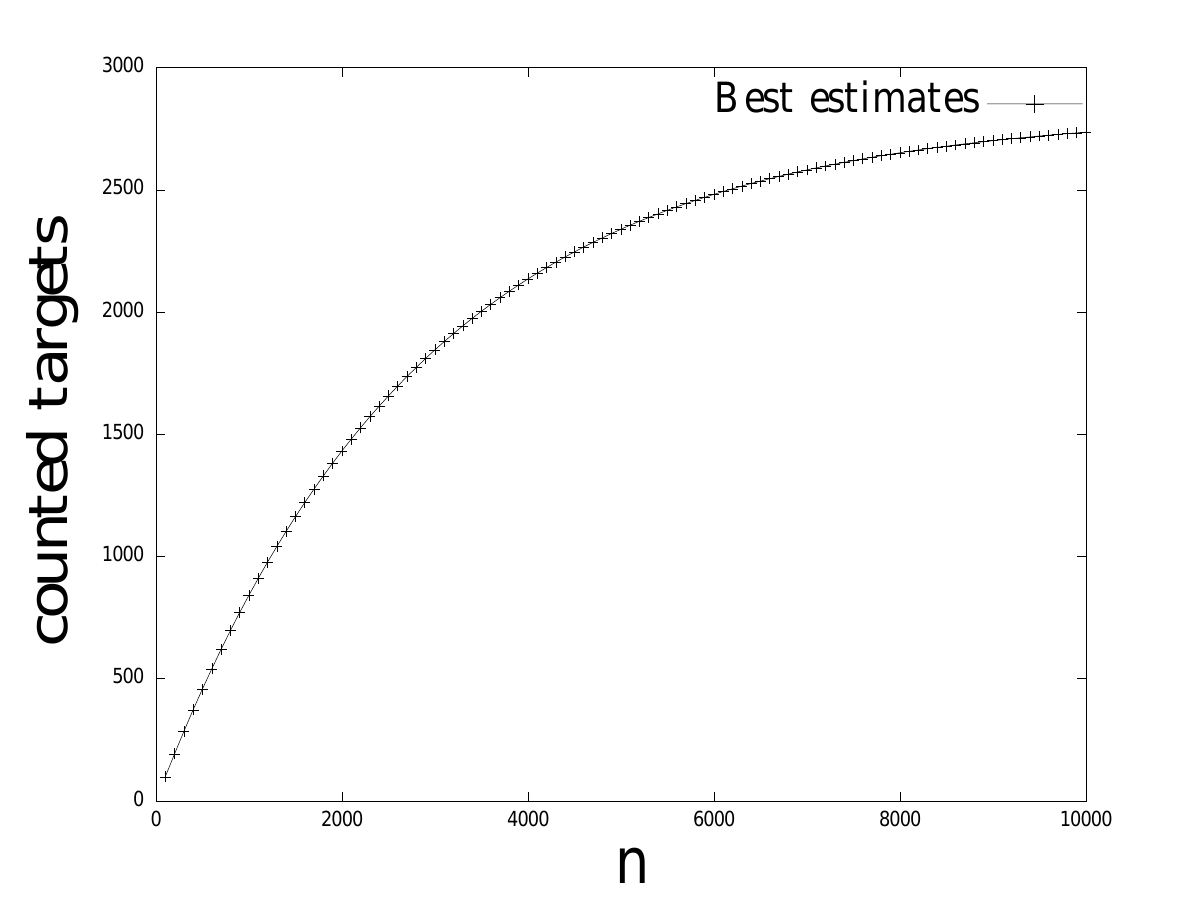}}    \\ 
\hspace*{-10pt} \subfloat[Plot of observed values overlaid our best estimates.]{\label{best_estimate_plot}\includegraphics[width=1.0\textwidth]{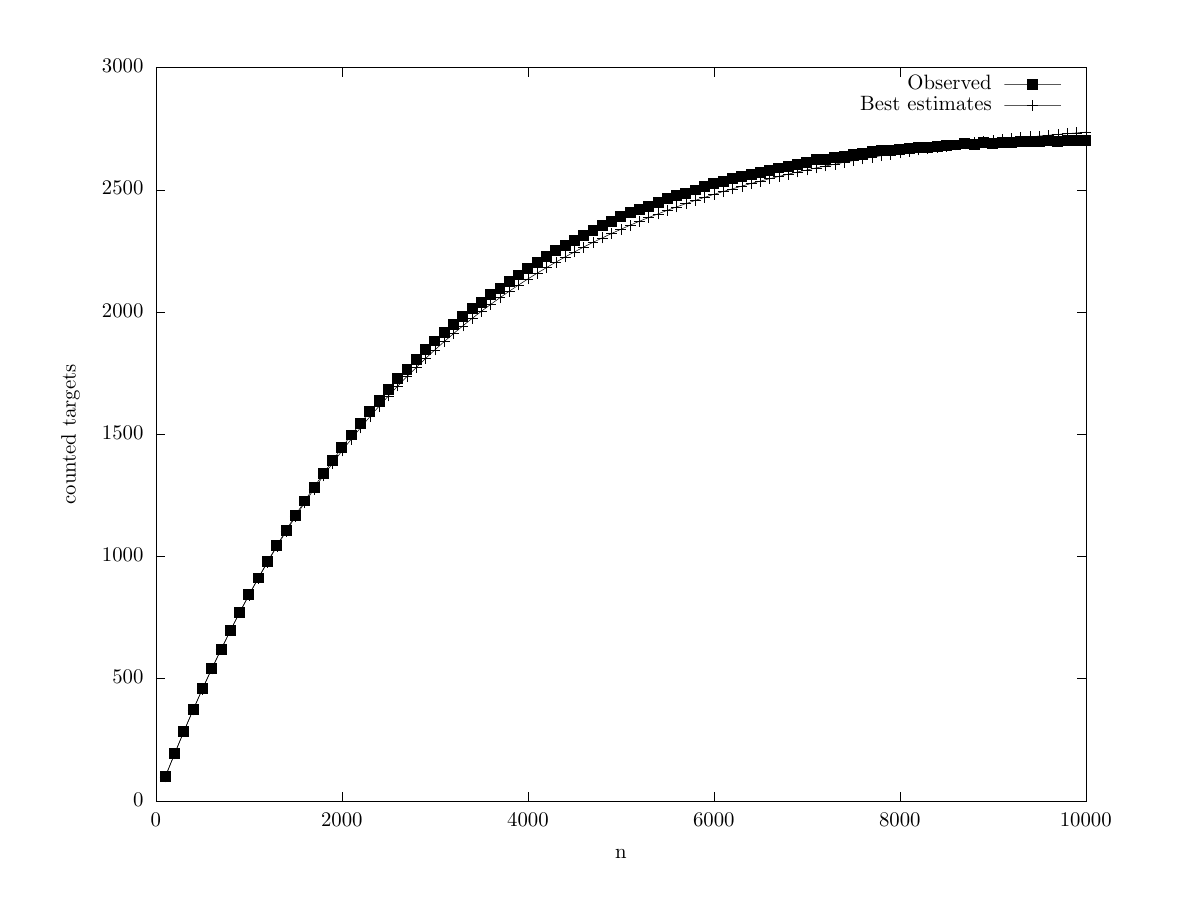}}
 \caption{Plots of Table \ref{summary:formulas}: observed values of the Euler integral alongside our best estimates from \eqref{final_second_order} (Theorem \ref{second_order_theorem}) with $c=\frac{{C_{6}}}{1-\frac{H}{\alpha}}$.}
 \label{numerical:results:plot}
  \end{center}
\end{figure}

\section{Conclusion}
\label{conclusion}
The main result in this paper is Theorem \ref{second_order_theorem}, which gives an approximation to the bias of the Euler characteristic integral as a point estimator for $n$, the number of targets over a discrete sensor field. Numerical results show that Theorem \ref{second_order_theorem} agrees closely with observations.

But this is just a preliminary analysis, and there is still much to be done. There are also many constraining assumptions we made here which are ripe for relaxing. Areas of further research might include looking into,

\begin{itemize}
\item
Disk-shaped targets with radii greater than 7.
\item
Formalizing what we mean when we say  ``first order,'' ``second order,'' or ``higher order'' approximations. 
\item
Finding exact upper and lower bounds on the expected value of the Euler integral.
\item
Disk-shaped targets with radii that are \emph{not} integer multiples of the side length of our sensors.
\item
The discrete Euler integral's complete probability distribution, or even just its variance.
\item
A generalization of the numerical analysis theory to any distribution of targets. 
\item
A generalization to any target shape (and not just 2-d projections of discrete disks!).
\item
Generalizations to multiple target shapes on the same field.
\item
Non-regular sensor grids.
\item
The error tolerance and stability of the discrete integral in the face of noisy data, broken sensors, or other real world problems.  Future work should include a numerical analysis of the theoretical work done on the Euler characteristic integrals of Gaussian random fields  \cite{bobrowski10}.
\item
The techniques for discrete Euler integral analysis and signal processing suggested in \cite{ghrist09,baryshnikov10}
\item
Improving our target enumeration estimate by analysis of time series data for moving targets, similar to \cite{singh07}.
\item
Extending the work done here to targets that are continuously placed onto a sensor field, as opposed to discretely placed with integer center coordinates.
\end{itemize}

Possible applications of any numerical analysis will require additional work. All theory discussed in this article has been based on a priori knowledge of target shape and target distribution. This suggests possible areas of research into relaxing of these  requirements in some way. We should try to determine estimates for these parameters from the data we are given -- a sensor field.

\nocite{*}
\bibliographystyle{plain}
\bibliography{references}

\end{document}